\DeclarePairedDelimiter\abs{\lvert}{\rvert}
\DeclareMathOperator{\Id}{Id}
\DeclareMathOperator{\Mat}{{Mat}}
\DeclareMathOperator{\Span}{{span}}
\DeclareMathOperator{\diag}{{diag}}
\DeclareMathOperator{\GL}{GL}
\DeclareMathOperator{\U}{U}
\DeclareMathOperator{\SU}{SU}
\DeclareMathOperator{\Orth}{O}
\DeclareMathOperator{\SO}{SO}
\newtheorem{theorem}{Theorem}
\newtheorem{lemma}{Lemma}
\newtheorem{definition}{Definition}
\newtheorem{remark}{Remark}
\newcommand{\R}{\mathbb{R}}
\newcommand{\N}{\mathbb{N}}
\newcommand{\Z}{\mathbb{Z}}
\newcommand{\p}{\partial}
\renewcommand{\epsilon}{\varepsilon}
\newcommand{\dx}{\: \mathrm{d}}
\newcommand{\nm}{\noalign{\smallskip}}
\newcommand{\ds}{\displaystyle}
\title{Topological phenomena in honeycomb Floquet metamaterials\thanks{\footnotesize
This work was supported in part by the Swiss National Science Foundation grant number
200021--200307.}}
\author{Habib Ammari\thanks{\footnotesize Department of Mathematics, ETH Z\"urich, R\"amistrasse 101, CH-8092 Z\"urich, Switzerland (habib.ammari@math.ethz.ch, thea.kosche@sam.math.ethz.ch).}  \and Thea Kosche\footnotemark[2]}
\date{}
\begin{document}
	\maketitle
	
\begin{abstract}
    Being driven by the goal of finding edge modes and of explaining the occurrence of edge modes in the case of time-modulated metamaterials in the high-contrast and subwavelength regime, we analyse the topological properties of Floquet normal forms of periodically parameterized time-periodic linear ordinary differential equations $\left\{\frac{d}{dt}X = A_\alpha(t)X\right\}_{\alpha \in \mathbb{T}^d}$. In fact, our main goal being the question whether an analogous principle as the bulk-boundary correspondence of solid-state physics is possible in the case of Floquet metamaterials, i.e., subwavelength high-contrast time-modulated metamaterials. This paper is a first step in that direction. Since the bulk-boundary correspondence states that topological properties of the bulk materials characterize the occurrence of edge modes, we dedicate this paper to the topological analysis of subwavelength solutions in Floquet metamaterials. This work should thus be considered as a basis for further investigation on whether topological properties of the bulk materials are linked to the occurrence of edge modes. The subwavelength solutions being described by a periodically parameterized time-periodic linear ordinary differential equation $\left\{\frac{d}{dt}X = A_\alpha(t)X\right\}_{\alpha \in \mathbb{T}^d}$, we put ourselves in the general setting of periodically parameterized time-periodic linear ordinary differential equations and introduce a way to (topologically) classify a Floquet normal form $F,~P$ of the associated fundamental solution $\left\{X_\alpha(t) = P(\alpha,t)\exp(tF_\alpha)\right\}_{\alpha \in \mathbb{T}^d}$. This is achieved by analysing the topological properties of the eigenvalues and eigenvectors of the monodromy matrix $X_\alpha(T)$ and the Lyapunov transformation $P(\alpha,t)$. The corresponding topological invariants can then be applied to the setting of Floquet metamaterials. In this paper these general results are considered in the case of a hexagonal structure. We provide two interesting examples of topologically non-trivial time-modulated hexagonal structures.
\end{abstract}

\noindent{\textbf{Mathematics Subject Classification (MSC2000):} 35J05, 35C20, 35P20, 74J20
		
\vspace{0.2cm}
		
\noindent{\textbf{Keywords:}} edge mode, subwavelength quasifrequency, time-modulation, Floquet metamaterial, topological invariants, honeycomb lattice
	\vspace{0.5cm}
		
\tableofcontents

%%%%%%%%%%%%%%%%%%%%%%%%%%%%%%%%%%%%%%%%%%%%%%%%%%%%%%%%%%%%%%%%%%%%%%%%%%%%%%%%%%%%%%%%%%%%%%%%%%%%%%%%%%%%%%%%%%%%%%%%%%%%%%%%%%%%%%%%%%%%
%%%%%%%%%%%%%%%%%%%%%%%%%%%%%%%%%%%%%%%%%%%%%%%%%% \section{Motivation and introduction} %%%%%%%%%%%%%%%%%%%%%%%%%%%%%%%%%%%%%%%%%%%%%%%%%%%
%%%%%%%%%%%%%%%%%%%%%%%%%%%%%%%%%%%%%%%%%%%%%%%%%%%%%%%%%%%%%%%%%%%%%%%%%%%%%%%%%%%%%%%%%%%%%%%%%%%%%%%%%%%%%%%%%%%%%%%%%%%%%%%%%%%%%%%%%%%%
\section{Motivation and introduction}

    The study of topological properties  in periodic
    physical systems is one of the most active  fields
    in solid-state physics. The topological invariants are the building elements of topological band theory since they  imply the presence of non-trivial bulk
    topologies, giving rise to topologically protected edge modes \cite{shortcourse,zak_experiment,phases,kanereview,bulkbdy,SSH,top10,zak}.
    Topological properties of electronic structures have been mathematically studied in the setting of the Schrödinger operator \cite{drouot2, drouot1,fefferman,fefferman2,fefferman3,fefferman4, lee-thorp}. 
    Topological invariants have been defined to capture the crystal's  properties. Then, if part of a crystalline structure is replaced with an arrangement that is associated with a different value of this invariant, not only will certain frequencies be localized to the interface  but this behaviour will be stable with respect to imperfections. These eigenmodes are known as \emph{edge modes} and we say that they are \emph{topologically protected} to refer to their robustness. This principle is known as the \emph{bulk-boundary correspondence} in quantum settings \cite{drouot2, drouot4,graf2013bulk2d,graf2018bulk,graf2018bulk2d,bulkbdy,prodan}. Here, the term \emph{bulk} is used to refer to parts of a crystal that are away from an edge.
        
    Taking inspiration from quantum mechanics, subwavelength topological photonic and phononic crystals, based on locally resonant crystalline structures with large material contrasts, have been studied both numerically, experimentally, and mathematically in \cite{ammari2020robust,ammari2020topological,zak_acoustic,top_acoustic_SSH,hai,
    top_review,pocock2018,top_subwavelength,top_subwavelength2,Yves2,Yves1,Yves3,zhao2018}.
    Subwavelength crystals (called also metamaterial structures) allow for the manipulation and localization of waves on very small spatial scales (much smaller than the wavelength) and are therefore very useful in physical applications, especially situations where the operating wavelengths are very large. Recently, this field  has experienced tremendous advances by exploring the novel and promising area of time-modulations. Research on {\em Floquet metamaterials} (or time-modulated metamaterials) aims to explore new phenomena arising from the temporal modulation of the material parameters of the structure.  It has enabled to open new paradigms for the manipulation of wave-matter interactions in both spatial and temporal domains  \cite{ammari2020time,fleury2016floquet,floq2,floq3,koutserimpas2018zero,
    koutserimpas2020electromagnetic,refedge1,rechtsman2013photonic,
    wilson2019temporally,floq2,wilson2018temporal}. To the best of our knowledge,  the mathematical analysis of wave propagation properties of Floquet metamaterials has been just started.  In \cite{ammari2020time}, a discrete characterization of the
    band structure in Floquet metamaterials is introduced. This characterization provides both theoretical insight and efficient numerical
    methods to compute the dispersion relationship of time-dependent structures.
    A study of exceptional points in the case of Floquet metamaterials is conducted in 
    \cite{thea2022jde}. Furthermore, in \cite{jinghao} the possibility of achieving non-reciprocal wave propagation in Floquet metamaterials is proven. 

    We are interested in edge modes, particularly edge modes which appear in honeycomb (e.g. graphene-like) structures and which result from topological non-trivialities due to time-modulation of the material parameters. In  \cite{ammari2020time}, it is shown that the subwavelength solutions to periodically time-modulated metamaterials are described by a second order linear ordinary differential equation (lODE). Using this result in the case of infinite periodic  crystals, it follows that the subwavelength solutions are described by a periodically parameterized lODE $\left\{\frac{d}{dt}X = A_\alpha(t)X\right\}_{\alpha \in \mathbb{T}^d}$ with time-periodic coefficient matrix $A_\alpha(t)$ and parameter space $\mathbb{T}^d \cong \mathbb{S}^1 \times \ldots \times \mathbb{S}^1$ ($d$-times) denoting the $d$-dimensional torus. This motivated the study of general periodically parameterized periodic lODEs of the form 
            \begin{align}\label{eq:periodic_param_lODE}
                \left\{\frac{d}{dt}X = A_\alpha(t)X\right\}_{\alpha \in \mathbb{T}^d},
            \end{align}
        where $A_\alpha$ is periodic with respect to time $t$, say with a period $T$.
    
   In this paper,  we will first derive topological invariants which distinguish different topological properties in the case of a general periodically parameterized periodic lODE, see section \ref{sec:Topological_phenomena_of_periodically_parameterized_linear_ODEs}. To this end, the existence of continuously parameterized Floquet normal form is proven in subsection \ref{subsec:Continuously_differentiably_parameterized_Floquet_normal_form}. A continuously parameterized Floquet normal form associated to a given periodically parameterized periodic lODE then serves as a basis for the definition of the introduced topological invariants. In fact, we will distinguish between two types of topological invariants, to which we will refer to as Type I and Type II. The reason being that the Type I topological invariants are also meaningful in the setting of continuously parameterized \emph{constant} lODEs, whereas non-trivial Type II topological invariants arise solely in the setting of time-dependent continuously parameterized periodic lODEs. The topological invariants are derived in subsection \ref{subsec:Derivation_of_topological_invariants}.

    In section \ref{sec:Application_to_high-contrast_acoustic_hexagonal_metamaterial_structures}, these results are then applied to graphene-like Floquet metamaterials. First the precise setting from \cite{ammari2020time} is recalled in subsection \ref{subsec:metamaterial_setting}. Due to the symmetry of honeycomb structures the parameter space will be reduced from a two-dimensional torus $\mathbb{T}^2$ to a one-dimensional torus, that is to the circle $\mathbb{S}^1$. The low dimension of the parameter space restricts the variety of meaningful topological invariants. The respective results are presented in subsection \ref{subsec:S1_param_top_invar}. Finally the results will be applied to honeycomb Floquet metamaterials and two interesting examples will be presented in subsection \ref{subsec:TypeI.a_top_eff}.

%%%%%%%%%%%%%%%%%%%%%%%%%%%%%%%%%%%%%%%%%%%%%%%%%%%%%%%%%%%%%%%%%%%%%%%%%%%%%%%%%%%%%%%%%%%%%%%%%%%%%%%%%%%%%%%%%%%%%%%%%%%%%%%%%%%%%%%%%%%%
%%%%%%%%%%%%%%%%%%%%%%%%%%%% \section\label{sec:Topological_phenomena_of_periodically_parameterized_linear_ODEs} %%%%%%%%%%%%%%%%%%%%%%%%%%%
%%%%%%%%%%%%%%%%%%%%%%%%%%%%%%%%%%%%%%%%%%%%%%%%%%%%%%%%%%%%%%%%%%%%%%%%%%%%%%%%%%%%%%%%%%%%%%%%%%%%%%%%%%%%%%%%%%%%%%%%%%%%%%%%%%%%%%%%%%%%

\section{Topological phenomena of periodically parameterized linear ODEs}\label{sec:Topological_phenomena_of_periodically_parameterized_linear_ODEs}

    The goal of this section is to investigate some topological phenomena arising in the case of periodically parameterized (with respect to $\alpha$) periodic lODEs $\{\frac{d}{dt}X = A(\alpha,t)X\}_{\alpha \in \mathbb{T}^d}$, where $A(\alpha,t)$ is periodic with respect to $t$ with period $T$ which is independent of $\alpha \in \mathbb{T}$. This will be achieved through an analysis of the Floquet normal form associated to the corresponding parameterized fundamental solution $ \{X_\alpha(t)\}_{\alpha \in \mathbb{T}^d}$ and where $\mathbb{T}^d$ is regarded as $\mathbb{R}^d/L$, where $L \subset \mathbb{R}^d$ is a full-dimensional lattice. To this end, we will first prove the existence of a continuously differentiable Floquet normal form associated to the parameterized lODE $\{\frac{d}{dt}X = A(\pi(\gamma),t)X\}_{\gamma \in \mathbb{R}^d}$, where $\pi$ is the canonical projection $\pi:\mathbb{R}^d \rightarrow \mathbb{R}^d/L = \mathbb{T}^d$. Under some assumptions on $A$ and since $\mathbb{R}^d$ is simply connected, it will be possible to construct a continuously differentiable Floquet normal form $X_{\pi(\gamma)} = P(\gamma,t)\exp(tF_\gamma)$ associated to the parameterized fundamental solution $X_{\pi(\gamma)}(t)$. Analysis of the $\mathbb{R}^d$-parameterized Floquet normal form will lead to the definition of two topological invariants which will be called \emph{Type I} and \emph{Type II topological invariants}.

    %%%%%%%%%%%%%%%%%%%%%%%%%%%%%%%%%%%%%%%%%%%%%%%%%%%%%%%%%%%%%%%%%%%%%%%%%%%%%%%%%%%%%%%%%%%%%%%%%%%%%%%%%%%%%%%%%%%%%%%%%%%%%%%%%%%%%%%
    %%%%%%%%%%%%%%%%%%%%%%%%%%%%%%%%%%%%%%%%%%%%%%%%%%% \subsection{Setting} %%%%%%%%%%%%%%%%%%%%%%%%%%%%%%%%%%%%%%%%%%%%%%%%%%%%%%%%%%%%%%
    %%%%%%%%%%%%%%%%%%%%%%%%%%%%%%%%%%%%%%%%%%%%%%%%%%%%%%%%%%%%%%%%%%%%%%%%%%%%%%%%%%%%%%%%%%%%%%%%%%%%%%%%%%%%%%%%%%%%%%%%%%%%%%%%%%%%%%%  

    \subsection{Setting}\label{sec:Setting}

    Let $A: \mathbb{R}^d\times\mathbb{R} \rightarrow \Mat_{N\times N}(\mathbb{C}), (\gamma, t) \rightarrow A(\gamma,t)$ be a continuously differentiable function, which is periodic with respect to $(\gamma,t)\in \mathbb{R}^{d+1}$. That is, there is a maximal lattice $L \subset \mathbb{R}^{d}$ and a positive number $T>0$, such that translations by lattice vectors of $L \times m\mathbb{Z}$ leave $A$ invariant. Denoting by $\mathbb{T}^d$ the $d$-dimensional torus given by $\mathbb{R}^d/L$ and by $\mathbb{S}^1$ the circle $\mathbb{R}/T\mathbb{Z}$, the function $A$ can be factored in the following way
        \begin{align*}
            \begin{xy}
                \xymatrix{
                    \mathbb{R}^{d}\times \mathbb{R} \ar[rr]^{\pi \times \psi} \ar[drr]_{A(\gamma,t)}  && \mathbb{T}^{d} \times \mathbb{S}^1 \ar[d]^{\tilde{A}(\alpha,\overline{t})}\\
                                                                                    && \Mat_{N\times N}(\mathbb{C}),
                }
            \end{xy}
        \end{align*}
    where $\pi: \mathbb{R}^{d} \rightarrow \mathbb{T}^{d}$ is given by $\pi(x) = x\mod L$ and $\psi: \mathbb{R} \rightarrow \mathbb{S}^{1}$ is given by $\psi(t) = t \mod T$. In what follows, the map $A: \mathbb{R}^d\times\mathbb{R} \rightarrow \Mat_{N\times N}(\mathbb{C})$ and the associated map $ \tilde{A}: \mathbb{T}^{d} \times \mathbb{S}^1 \rightarrow \Mat_{N\times N}(\mathbb{C})$ will both be denoted by $A$. It will be distinguished between $A$ and $\tilde{A}$ by using different notation for the argument, that is, $A(\gamma,t)$ will indeed denote $A(\gamma,t)$ where $A$ is given by $A: \mathbb{R}^d\times\mathbb{R} \rightarrow \Mat_{N\times N}(\mathbb{C})$ with $\gamma \in \mathbb{R}^d$ and $t \in \mathbb{R}$, $A(\alpha,\overline{t})$ will denote $\tilde{A}(\alpha, \overline{t})$ with $\alpha \in \mathbb{T}^d$, $\overline{t} \in \mathbb{S}^1$ and $A(\alpha,t)$ will denote $\tilde{A}(\alpha, \overline{t})$ where $\overline{t} = t \mod T$ with $\alpha \in \mathbb{T}^d$ and $t \in \mathbb{R}$.

    In the following the topological invariants associated to the parameterized fundamental solution of the parameterized lODE
        \begin{align}\label{eq:lODE}
            \left\{\frac{d}{dt}X = A(\alpha,t)X\right\}_{\alpha \in \mathbb{T}^d}
        \end{align}
    will be defined and investigated. To this end, the family of fundamental solutions $\{X_\alpha\}_{\alpha \in \mathbb{T}^d}$ to the parameterized lODE \eqref{eq:lODE} will be understood as a function
        \begin{align*}
            X : & \mathbb{T}^d\times\mathbb{R} &\longrightarrow  &\GL_{N}(\mathbb{C})\\
                & (\alpha,t)                   &\longmapsto      &X_\alpha(t),
        \end{align*}
    where for each $\alpha \in \mathbb{T}^d$ and each $t \in \mathbb{R}$ the matrix $X_\alpha(t)$ denotes the fundamental solution of the lODE $\frac{d}{dt}X_\alpha = A(\alpha,t)X_\alpha$ evaluated at $t \in \mathbb{R}$. In the following it will also be assumed that $X_{\alpha}(T)$ is diagonalizable for all $\alpha \in \mathbb{T}^{d}$ and such that for each $\alpha \in \mathbb{T}^d$ there exists a neighborhood $W \subset \mathbb{T}^d$ of $\alpha$ and continuously differentiable functions $\lambda_1,\ldots,\lambda_N: W \rightarrow \mathbb{C}$ and $\eta_1,\ldots,\eta_N: W \rightarrow \mathbb{CP}^{N-1}$, such that for all $\alpha' \in W$ the $N$-tuple $(\lambda_1(\alpha'),\ldots,\lambda_N(\alpha'))$ gives the eigenvalues of $X_{\alpha'}(T)$ and $(\eta_1(\alpha'),\ldots,\eta_N(\alpha'))$ are the respective eigenspaces. We will call this property \emph{local $C^1$-diagonalizability}.\footnote{In order to avoid pathologies in the case of eigenvalues with geometric multiplicity greater than $1$, one should further assume that $\Span_\mathbb{C}(\eta_1, \ldots, \eta_N) = \mathbb{C}^N$, that is, that the eigenspaces $(\eta_1, \ldots, \eta_N)$ span the whole space $\mathbb{C}^N$.}
    
    %%%%%%%%%%%%%%%%%%%%%%%%%%%%%%%%%%%%%%%%%%%%%%%%%%%%%%%%%%%%%%%%%%%%%%%%%%%%%%%%%%%%%%%%%%%%%%%%%%%%%%%%%%%%%%%%%%%%%%%%%%%%%%%%%%%%%%%
    %%%%%%%%%%%%%%%%%%%%%%%%%%%%%%% Continuously differentiably parameterized Floquet normal form %%%%%%%%%%%%%%%%%%%%%%%%%%%%%%%%%%%%%%%%%
    %%%%%%%%%%%%%%%%%%%%%%%%%%%%%%%%%%%%%%%%%%%%%%%%%%%%%%%%%%%%%%%%%%%%%%%%%%%%%%%%%%%%%%%%%%%%%%%%%%%%%%%%%%%%%%%%%%%%%%%%%%%%%%%%%%%%%%%  

    \subsection{Continuously differentiably parameterized Floquet normal form}\label{subsec:Continuously_differentiably_parameterized_Floquet_normal_form}

    In the following subsection, the existence of a continuously differentiably parameterized Floquet normal form associated to $C^1$-diagonalizable fundamental solutions of a parameterized periodic lODE 
        \begin{align}\label{eq:Rd_param_period_lODE}
            \left\{\frac{d}{dt}X = A(\gamma,t)X\right\}_{\gamma \in \mathbb{R}^d}
        \end{align}
    will be proven. This existence will then be used in the subsequent sections to analyse topological phenomena arising in the case of periodically parameterized periodic lODEs.
    
    Given a periodic linear ordinary differential equation
        \begin{align}\label{eq:non_parm_lODE}
            \frac{d}{dt}X = A(t)X
        \end{align}
    with $A: \mathbb{R} \rightarrow \Mat_{N \times N}(\mathbb{R})$ being a $T$-periodic function, denote by $X$ its associated fundamental solution, that is, the solution to the initial value problem    
        \begin{align*}
            \frac{d}{dt}X   &= A(t)X,    \\
            X(0)            &= \Id_N.
        \end{align*}
    Then Floquet theory states that $X$ can be decomposed into a \emph{Floquet normal form} which is a decomposition of the form
        \begin{align*}
            X(t) = P(t)\exp(tF),
        \end{align*}
    where $P: \mathbb{R} \rightarrow \GL_N(\mathbb{C})$ is $T$-periodic and $F \in \Mat_{N \times N}(\mathbb{C})$ is a constant matrix. Note that `the' Floquet normal form associated to the fundamental solution of $X$ is never well-defined due to the non-injectivity of the exponential map. In fact, the eigenvalues of $F$ are uniquely defined up to adding an integer multiple of $2\pi i/T$.
    
    In the following we will refer to any choice of $F$ as a \emph{Floquet exponent matrix} associated to $X$ or the lODE \eqref{eq:non_parm_lODE} and to the corresponding choice of $P$ as the associated \emph{Lyapunov transform}.
    
    This choice of naming is due to the fact that a Lyapunov transform $P$ associated to the lODE \eqref{eq:non_parm_lODE} can serve to transform the time-dependent lODE \eqref{eq:non_parm_lODE} into a lODE with constant coefficients. In fact, making the substitution $Y(t) = P(t)^{-1}X(t)$ will transform the time-dependent lODE \eqref{eq:non_parm_lODE} into the following lODE:
        \begin{align*}
            \frac{d}{dt}Y = F Y,
        \end{align*}
    where $F$ is the Floquet exponent matrix corresponding to $P$.
    
    One possibility to obtain a choice of Floquet normal form (even all choices of Floquet normal forms) associated to the periodic lODE \eqref{eq:non_parm_lODE} is to diagonalize the \emph{monodromy matrix} $X(T)$ of the lODE \eqref{eq:non_parm_lODE}:
        \begin{align*}
            X(T) = \eta\Lambda\eta^{-1} = \begin{pmatrix}
                | & &| \\
                \eta_1 &\ldots &\eta_N \\
                | & &| \\
            \end{pmatrix}\begin{pmatrix}
                \lambda_1 & & \\
                 &\ddots & \\
                & &\lambda_N \\
            \end{pmatrix}\begin{pmatrix}
                | & &| \\
                \eta_1 &\ldots &\eta_N \\
                | & &| \\
            \end{pmatrix}^{-1}.
        \end{align*}
    The eigenvalues $(\lambda_1,\ldots,\lambda_N)$ are than the \emph{characteristic multipliers} of the system \eqref{eq:non_parm_lODE} and all possible choices of \emph{Floquet exponents} are in bijection with the solutions $(\mu_1,\ldots,\mu_N)$ to the equation
        \begin{align*}
            \begin{pmatrix}
                \lambda_1 & & \\
                 &\ddots & \\
                & &\lambda_N \\
            \end{pmatrix} = \begin{pmatrix}
                \exp(T\mu_1) & & \\
                 &\ddots & \\
                & &\exp(T\mu_N) \\
            \end{pmatrix}.
        \end{align*}
    Given a choice of Floquet exponents $(\mu_1,\ldots,\mu_N)$ the corresponding Floquet exponent matrix can be obtained via    
        \begin{align*}
            F = \eta\mu\eta^{-1} = \begin{pmatrix}
                | & &| \\
                \eta_1 &\ldots &\eta_N \\
                | & &| \\
            \end{pmatrix}\begin{pmatrix}
                \mu_1 & & \\
                 &\ddots & \\
                & &\mu_N \\
            \end{pmatrix}\begin{pmatrix}
                | & &| \\
                \eta_1 &\ldots &\eta_N \\
                | & &| \\
            \end{pmatrix}^{-1}
        \end{align*}

    Any choice of $F$ and in particular the above choice of $F$ uniquely determines the Lyapunov transform $P$ as
        \begin{align*}
            P(t) = X(t)\exp(-tF),
        \end{align*}
    obtaining one possible choice of Floquet normal form associated to the periodic lODE \eqref{eq:non_parm_lODE}.

    In fact, one obtains all possible Floquet normal forms via the above procedure. Indeed, it is easy to see that the family of possible Floquet normal forms is in bijection with
        \begin{align*}
            \left\{\begin{pmatrix}
                \mu_1 + n_1\frac{2\pi i}{T} & & \\
                &\ddots & \\
               & &\mu_N + n_N\frac{2\pi i}{T} \\
            \end{pmatrix} \quad : \qquad n_1,\ldots, n_N \in \mathbb{Z}
             \right\}.
        \end{align*}

    We are interested in continuously differentiably parameterized Floquet normal forms
        \begin{align*}
            X(\gamma,t) = P(\gamma,t)\exp(tF_\gamma)
        \end{align*}
    associated to parameterized periodic lODEs as in equation \eqref{eq:Rd_param_period_lODE}.
    To this end, we will use the above procedure which insures that any continuously differentiable parameterization of a choice of Floquet exponents $(\mu_1(\gamma),\ldots,\mu_N(\gamma))$ and any continuously differentiable parameterization of a choice of eigenspaces $(\eta_1(\gamma),\ldots,\eta_N(\gamma))$ provides a continuously differentiable parameterization of a choice of Floquet normal form of the system \eqref{eq:Rd_param_period_lODE}.
    
    Given a parameterized time-periodic lODE of the form (as in equation \eqref{eq:Rd_param_period_lODE})
        \begin{align*}
            \left\{\frac{d}{dt}X = A(\gamma,t)\right\}_{\gamma \in \mathbb{R}^d},
        \end{align*}
    where $A(\gamma,\cdot): \mathbb{R} \rightarrow \Mat_{N \times N}(\mathbb{C}^d)$ is $T$-periodic for all $\gamma \in \mathbb{R}^d$, we will thus need to construct continuously differentiable parameterizations of the Floquet exponents and corresponding eigenspaces of the Monodromy matrix.
    To this end, the following definition will become useful and is in fact, necessary for the construction of the desired continuously differentiable parametrizations.

    \begin{definition}[local $C^1$-diagonalizability]
        Let $\mathcal{M}$ be a differentiable manifold and let $M:\mathcal{M} \rightarrow \Mat_{N \times N}(\mathbb{C})$ be a matrix-valued function. Then $M$ is \emph{locally $C^1$-diagonalizable} if the following holds. For all $\gamma \in \mathcal{M}$ there exists a neighborhood $W \subset \mathcal{M}$ of $\gamma$ and continuously differentiable functions $\lambda_1,\ldots,\lambda_N: W \rightarrow \mathbb{C}$ and $\eta_1,\ldots,\eta_N: W \rightarrow \mathbb{CP}^{N-1}$, such that for all $\gamma' \in W$ the $N$-tuple $(\lambda_1(\gamma'),\ldots,\lambda_N(\gamma'))$ gives the eigenvalues of $X_{\gamma'}(T)$ and $(\eta_1(\gamma'),\ldots,\eta_N(\gamma'))$ the respective eigenspaces of $M(\gamma')$, such that the $\mathbb{C}$-span of the vector spaces $\eta_1(\gamma'),\ldots,\eta_N(\gamma')$ is equal to $\Span_\mathbb{C}(\eta_1(\gamma'),\ldots,\eta_N(\gamma')) = \mathbb{C}^N$.
    \end{definition}

    The following lemma will be at the heart of the existence of a continuously differentiably parametrized Floquet normal form.

    \begin{lemma}\label{lem:cont_diff_eig_val_vect}
        Let $A: \mathbb{R}^d \times \mathbb{R} \rightarrow \Mat_{N\times N}(\mathbb{C})$ be continuously differentiable function such that
            \begin{align*}
                A(\gamma,\cdot) \text{ is } T\text{-periodic for all } \gamma \in \mathbb{R}^d.
            \end{align*}
        Let $X_\gamma(t)$ be the fundamental solution associated to the parameterized lODE
            \begin{align*}
                \left\{\frac{d}{dt}X = A(\gamma,t)\right\}_{\gamma \in \mathbb{R}^d},
            \end{align*}
        and assume that the monodromy matrix
            \begin{align*}
                X(\cdot,T):\quad     &\mathbb{R}^d   &\longrightarrow    &\hspace{1cm} \GL_N(\mathbb{C}),\\
                           \quad     &\gamma         &\longmapsto        &\hspace{1cm} X(\gamma,t)
            \end{align*}
        is locally $C^1$-diagonalizable for all $\gamma \in \mathbb{R}^d$. Denote by 
        \begin{align*}
            \Lambda: \; \mathbb{R}^d \longrightarrow  \mathbb{C}^N
        \end{align*}
        a map that associates to each $\gamma \in \mathbb{R}^d$ the eigenvalues $\Lambda(\gamma)=(\Lambda_1(\gamma), \ldots, \Lambda_N(\gamma))$ of $X(\gamma,T)$, which are the characteristic multipliers of the parameterized lODE. Furthermore, denote by 
        \begin{align*}
            \eta: \; \mathbb{R}^d  \longrightarrow  (\mathbb{CP}^{N-1})^N
        \end{align*}
        the map that associates to each $\gamma \in \mathbb{R}^d$ the eigenspaces $\eta(\gamma)=(\eta_1(\gamma), \ldots, \eta_N(\gamma))$ of $X(\gamma,T)$, such that $\eta_n(\gamma)$ is the eigenspace corresponding to the eigenvalue $\Lambda_n(\gamma)$ and $\mathbb{CP}^{N-1}$ denotes $(N-1)$-dimensional complex projective space.

        Then the map $\Lambda\times\eta$ can be chosen to be continuously differentiable.
    \end{lemma}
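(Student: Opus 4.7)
My plan is to globalize the local $C^{1}$-diagonalizations to a single global $C^{1}$-parametrization via a covering-space argument, exploiting the fact that $\mathbb{R}^d$ is simply connected (indeed, contractible).

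First, I would introduce the ``space of orderings''
\begin{align*}
    Z := \Big\{\,(\gamma,(\lambda_1,\ldots,\lambda_N),(\eta_1,\ldots,\eta_N)) \in \mathbb{R}^d\times\mathbb{C}^N\times(\mathbb{CP}^{N-1})^N \;:\; \\
     X(\gamma,T)\eta_n = \lambda_n \eta_n \text{ for } n=1,\ldots,N,\ \Span_\mathbb{C}(\eta_1,\ldots,\eta_N) = \mathbb{C}^N\,\Big\},
\end{align*}
together with the projection $p : Z \to \mathbb{R}^d$ onto the first factor. The local $C^{1}$-diagonalizability of $X(\cdot,T)$ furnishes, for each $\gamma_0 \in \mathbb{R}^d$ and each point in the fiber $p^{-1}(\gamma_0)$, a neighborhood $W$ of $\gamma_0$ on which a $C^{1}$-section $(\Lambda|_W,\eta|_W)$ of $p$ exists realizing that particular ordering. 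The different local sections above $W$ are obtained from one fixed local section by the action of the finite group $S_N$ permuting the entries (with the action factored through the appropriate quotient when some eigenvalues coincide). Hence $p$ is a local diffeomorphism with discrete fibers, i.e.\ $p$ is a covering map, and each sheet is naturally a $C^{1}$-submanifold of $\mathbb{R}^d\times\mathbb{C}^N\times(\mathbb{CP}^{N-1})^N$.

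Second, I would invoke the classification of coverings of a simply connected base: since $\mathbb{R}^d$ is simply connected and locally path-connected, every covering of $\mathbb{R}^d$ is trivial. Therefore $Z$ decomposes as a disjoint union of connected components, each of which is mapped diffeomorphically onto $\mathbb{R}^d$ by $p$. Choosing any such component $Z_0$ and letting $s : \mathbb{R}^d \to Z_0$ denote the inverse of $p|_{Z_0}$ yields a $C^{1}$-section of $p$ on all of $\mathbb{R}^d$. Composing $s$ with the projection onto the factor $\mathbb{C}^N\times(\mathbb{CP}^{N-1})^N$ produces the desired global $C^{1}$-map $\Lambda\times\eta$.

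The main obstacle will be verifying that $p$ is genuinely a covering map at points where $X(\gamma,T)$ has eigenvalues of multiplicity greater than one, where naively the eigenvalue/eigenspace data can bifurcate. The key is that the hypothesis of local $C^{1}$-diagonalizability has been formulated precisely to exclude such pathologies: it asserts that around every $\gamma_0$ an ordered $N$-tuple of (possibly repeated) eigenvalues together with a spanning $N$-tuple of eigenspaces can be selected in a $C^{1}$-manner, and that all other local orderings are obtained by a permutation of these. Once this local triviality is pinned down, the covering-space structure and the passage from continuous to $C^{1}$-section (via the local diffeomorphism property of $p$) are straightforward, and the result follows from $\pi_1(\mathbb{R}^d) = 0$.
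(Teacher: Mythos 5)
Your overall strategy --- globalize the local $C^1$-diagonalizations over the simply connected base $\mathbb{R}^d$ --- is exactly the paper's: the paper's entire proof is the single sentence that the lemma ``is a direct consequence of the $C^1$-diagonalizability of $X(\cdot,T)$ and the simply connectedness of $\mathbb{R}^d$,'' so your write-up is in fact far more explicit than the original. However, the specific way you formalize it has a genuine gap, and it is precisely the one you flag at the end without resolving. The map $p : Z \to \mathbb{R}^d$ is \emph{not} a covering map in general, because its fibers need not be discrete: if some eigenvalue of $X(\gamma_0,T)$ has geometric multiplicity at least $2$ (the extreme case being $X(\cdot,T)\equiv \Id_N$, which is certainly locally $C^1$-diagonalizable), then the fiber $p^{-1}(\gamma_0)$ contains a continuum of spanning $N$-tuples of eigenlines inside the degenerate eigenspace. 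Local $C^1$-diagonalizability asserts the \emph{existence} of one $C^1$ ordered selection near each point; it does not assert that every point of $Z$ over $W$ lies on a sheet, nor that any two local sections differ by a permutation, so the claim that ``$p$ is a local diffeomorphism with discrete fibers'' does not follow from the hypothesis. On the open set where the spectrum of $X(\cdot,T)$ is simple your argument is correct ($Z$ restricted there is a genuine $S_N$-covering), but that set need not be all of $\mathbb{R}^d$, nor simply connected.

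The natural repair is to replace $Z$ by the \'etal\'e space of the sheaf of germs of local $C^1$-diagonalizing sections (equivalently, to glue the given local sections directly over a cover of $\mathbb{R}^d$ by convex sets); but even then one must verify a continuation property --- that a local section extends along every path --- before the monodromy theorem applies, and this is exactly where the degenerate locus requires an argument that neither you nor the paper supplies. So your proposal faithfully reproduces the paper's idea, and is more candid than the paper about where the difficulty sits, but as written it does not close the case of eigenvalue degeneracies --- the same case the paper's own footnote on requiring $\Span_{\mathbb{C}}(\eta_1,\ldots,\eta_N)=\mathbb{C}^N$ is worried about.
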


    \begin{proof}
        This is a direct consequence of the $C^1$-diagonalizability of $X(\cdot,t)$ and the simply connectedness of $\mathbb{R}^d$.
    \end{proof}

    \begin{theorem}[$C^1$-parameterized Floquet normal form]\label{thm:C1-param_FNF_for_Rd-param_lODE}
        Let $A: \mathbb{R}^d \times \mathbb{R} \rightarrow \Mat_{N\times N}(\mathbb{C})$ be continuously differentiable function such that
            \begin{align*}
                A(\gamma,\cdot) \text{ is } T\text{-periodic for all } \gamma \in \mathbb{R}^d.
            \end{align*}
        Let $X_\gamma(t)$ be the fundamental solution associated to the parameterized lODE
            \begin{align}\label{eq:Thm_Rd_param_lODE}
                \left\{\frac{d}{dt}X = A(\gamma,t)\right\}_{\gamma \in \mathbb{R}^d},
            \end{align}
        and assume that the monodromy matrix
            \begin{align*}
                X(\cdot,T): \quad     &\mathbb{R}^d   &\longrightarrow    &\hspace{1cm} \GL_N(\mathbb{C}),\\
                            \quad     &\gamma         &\longmapsto        &\hspace{1cm} X(\gamma,t)
            \end{align*}
        is locally $C^1$-diagonalizable for all $\gamma \in \mathbb{R}^d$.

        Then there exists a continuously differentiably parameterized Floquet normal form associated to the lODE \eqref{eq:Thm_Rd_param_lODE}. In fact, the following holds:
            \begin{enumerate}
                \item There exists a continuously differentiable parameterization of the characteristic multipliers $$(\lambda_1(\gamma),\ldots,\lambda_N(\gamma))$$ and corresponding eigenspaces $(\eta_1(\gamma),\ldots,\eta_N(\gamma))$ associated to the system \eqref{eq:Thm_Rd_param_lODE}.
                \item Fixing such a continuously differentiable parametrization, there exists a choice of Floquet exponents $(\mu_1(\gamma),\ldots,\mu_N(\gamma))$ which will satisfy $\exp(\mu_n T) = \lambda_n$ for $n \in \{1,\ldots,N\}$ and which continuously differentiably depends on $\gamma \in \mathbb{R}^d$.
                \item Fixing such a continuously differentiable parametrization, a continuously differentiably parameterized choice of Floquet normal form of the system \eqref{eq:Thm_Rd_param_lODE} is given by 
                            \begin{align*}
                                F(\gamma)~ ~ &= \begin{pmatrix}
                                                    | & &| \\
                                                    \eta_1 &\ldots &\eta_N \\
                                                    | & &| \\
                                                \end{pmatrix}\begin{pmatrix}
                                                    \mu_1 & & \\
                                                    &\ddots & \\
                                                    & &\mu_N \\
                                                \end{pmatrix}\begin{pmatrix}
                                                    | & &| \\
                                                    \eta_1 &\ldots &\eta_N \\
                                                    | & &| \\
                                                \end{pmatrix}^{-1},  \\
                                P(\gamma,t) &= X_\gamma(t)\exp(-tF).
                            \end{align*}
            \end{enumerate}
    \end{theorem}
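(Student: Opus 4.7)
The plan is to establish the three claims in sequence, with Part (1) coming essentially for free from Lemma \ref{lem:cont_diff_eig_val_vect}, and Parts (2)--(3) reducing to lifting problems on $\mathbb{R}^d$ that are solvable by simple connectedness, together with standard ODE regularity for $X_\gamma(t)$.

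For Part (1), I would simply invoke Lemma \ref{lem:cont_diff_eig_val_vect} to fix, once and for all, globally $C^1$ choices of characteristic multipliers $\lambda_n(\gamma)\in\mathbb{C}^*$ (nonzero because $X(\gamma,T)\in \GL_N(\mathbb{C})$) and eigen-lines $\eta_n(\gamma)\in\mathbb{CP}^{N-1}$, for $n=1,\ldots,N$. For Part (2), each $\lambda_n:\mathbb{R}^d\to\mathbb{C}^*$ is $C^1$ and $\mathbb{R}^d$ is simply connected, so the universal covering $\exp:\mathbb{C}\to\mathbb{C}^*$ admits a global $C^1$ lift $\ell_n:\mathbb{R}^d\to\mathbb{C}$ with $\exp(\ell_n)=\lambda_n$; setting $\mu_n(\gamma):=\ell_n(\gamma)/T$ then yields the required $C^1$ Floquet exponents satisfying $\exp(T\mu_n(\gamma))=\lambda_n(\gamma)$.

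Part (3) is where I expect the main technical friction, because the formula for $F(\gamma)$ employs concrete eigenvectors as columns rather than eigen-lines. I therefore first need to upgrade each $\eta_n:\mathbb{R}^d\to\mathbb{CP}^{N-1}$ to a $C^1$ eigenvector representative $\tilde\eta_n:\mathbb{R}^d\to\mathbb{C}^N\setminus\{0\}$. Pulling back the tautological line bundle along $\eta_n$ gives a complex line bundle on the contractible base $\mathbb{R}^d$, hence trivial, and any nowhere-vanishing $C^1$ section supplies $\tilde\eta_n$; equivalently, one glues the local $C^1$ eigenvector choices provided by local $C^1$-diagonalizability via a $C^1$ partition of unity on $\mathbb{R}^d$. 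The assumption $\Span_\mathbb{C}(\eta_1(\gamma),\ldots,\eta_N(\gamma))=\mathbb{C}^N$ then ensures pointwise invertibility of the matrix $(\tilde\eta_1\,|\,\cdots\,|\,\tilde\eta_N)(\gamma)$, so its inverse is $C^1$ and $F(\gamma)$ is $C^1$ as a product of $C^1$ matrix-valued functions.

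Finally, classical smooth dependence of ODE solutions on parameters (applied to the $C^1$ right-hand side $A(\gamma,t)$) gives that $X_\gamma(t)$ is $C^1$ in $(\gamma,t)$, hence so is $P(\gamma,t):=X_\gamma(t)\exp(-tF(\gamma))$. The identity $X_\gamma(t)=P(\gamma,t)\exp(tF(\gamma))$ is immediate from the definition, and the $T$-periodicity of $P(\gamma,\cdot)$ for each fixed $\gamma$ follows by invoking the non-parameterized Floquet theorem recalled before the statement, applied pointwise in $\gamma$ to the construction just given. The only non-routine step is the simultaneous $C^1$ lifting of the eigen-lines carried out in Part (3); everything else is either a direct appeal to Lemma \ref{lem:cont_diff_eig_val_vect} or a standard consequence of simple connectedness of $\mathbb{R}^d$ and $C^1$-dependence of ODE flows on parameters.
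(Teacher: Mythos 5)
Your proposal is correct and follows essentially the same route as the paper: Part (1) is a direct appeal to Lemma \ref{lem:cont_diff_eig_val_vect}, Part (2) lifts each $\lambda_n$ through the universal cover $\exp:\mathbb{C}\to\mathbb{C}^*$ using simple connectedness of $\mathbb{R}^d$, and Part (3) assembles $F$ and $P$ by the diagonalization procedure described before the theorem. You actually supply a detail the paper leaves implicit, namely the global $C^1$ selection of eigenvector representatives from the eigen-lines $\eta_n$ via triviality of a line bundle over a contractible base (just note that a naive partition-of-unity gluing of local nonvanishing sections can vanish, so the bundle-triviality argument is the one to rely on).
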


    \begin{proof}
        The statement of the theorem mainly follows from the construction of a Floquet normal form in the beginning of this subsection together with  Lemma \ref{lem:cont_diff_eig_val_vect}.
        
        The first part is precisely the statement of Lemma \ref{lem:cont_diff_eig_val_vect}, which gives a continuously differentiable parameterization of the characteristic multipliers and associated eigenspaces.

        In part two, the only thing that remains to be justified is the existence of a continuously differentiable choice of Floquet exponents associated to the given choice of characteristic multipliers $(\lambda_1(\gamma),\ldots,\lambda_N(\gamma))$. Its existence is due to the fact that the exponential map gives a universal cover of the space $\mathbb{C}^*$ and due to the simply connectedness of $\mathbb{R}^d$, it is possible to lift the maps $\lambda_n: \mathbb{R}^d \rightarrow \mathbb{C}^*$ to maps $\mu_n: \mathbb{R}^d \rightarrow \mathbb{C}$ via the exponential map $\exp(T(\cdot))$. In other words, for any $n \in \{1,\ldots,N\}$ there exists a continuously differentiable map $\mu_n$ which makes the following diagram commuting:
                \begin{align*}
                    \begin{xy}
                        \xymatrix{
                                                                                    &&& \mathbb{C} \ar[dd]^{\exp(T(\cdot))}    \\
                            \mathbb{R}^{d} \ar[urrr]^{\mu_n} \ar[drrr]_{\lambda_n}                        \\
                                                                                    &&& \mathbb{C}^*,
                        }
                    \end{xy}
                \end{align*}

        Part three follows when the procedure, which was explained in the first half of this subsection, is applied to the continuously differentiable parameterization of a choice of Floquet exponents of part two and to the continuously differentiable parameterization of a choice of eigenspaces of part one.
    \end{proof}

    %%%%%%%%%%%%%%%%%%%%%%%%%%%%%%%%%%%%%%%%%%%%%%%%%%%%%%%%%%%%%%%%%%%%%%%%%%%%%%%%%%%%%%%%%%%%%%%%%%%%%%%%%%%%%%%%%%%%%%%%%%%%%%%%%%%%%%%%%%%%%%
    %%%%%%%%%%%%%%%%%%%%%%%%%%%%%%%%%%%%%%%%%%%%%%%%%%%% \subsection{Derivation of topological invariants} %%%%%%%%%%%%%%%%%%%%%%%%%%%%%%%%%%%%%%%
    %%%%%%%%%%%%%%%%%%%%%%%%%%%%%%%%%%%%%%%%%%%%%%%%%%%%%%%%%%%%%%%%%%%%%%%%%%%%%%%%%%%%%%%%%%%%%%%%%%%%%%%%%%%%%%%%%%%%%%%%%%%%%%%%%%%%%%%%%%%%%%

    \subsection{Topological phenomena of parameterized Floquet normal forms}\label{subsec:Derivation_of_topological_invariants}

        In this subsection we will come back to the setting where the considered parameterized periodic lODE 
            \begin{align}\label{eq:Td_param_lODE}
                \left\{\frac{d}{dt}x = A(\alpha,t)x \right\}_{\alpha \in \mathbb{T}^d}
            \end{align}
        is periodically parameterized by a torus $\mathbb{T}^d$, which is understood as $\mathbb{T}^d = \mathbb{R}^d/L$ for some fixed lattice $L \subset \mathbb{R}$. 

        The parameterized system \eqref{eq:Td_param_lODE} can be rewritten as 
            \begin{align}\label{eq:Rd_param_lODE}
                \left\{\frac{d}{dt}x = A(\gamma,t)x \right\}_{\gamma \in \mathbb{R}^d}
            \end{align}
        together with the identification $ A(\gamma,t) = A(\alpha,t) $ for $\gamma = \alpha \mod L$ and $\alpha \in \mathbb{T}^d$.

        Assume that the monodromy matrix associated to the system \eqref{eq:Rd_param_lODE} is locally $C^1$-diagonalizable. Then it follows from Theorem \ref{thm:C1-param_FNF_for_Rd-param_lODE} that there exist continuously differentiable parameterizations of the characteristic multipliers
            \begin{align*}
                (\lambda_n: \mathbb{R}^d \rightarrow \mathbb{C}^*)_{n = 1}^N
            \end{align*}
        with corresponding eigenspaces
            \begin{align*}
                (\eta_n: \mathbb{R}^d \rightarrow \mathbb{CP}^{N - 1})_{n = 1}^N.
            \end{align*}
        Being eigenvalues and eigenvectors of the $L$-periodic map $X(T,\cdot)$, they fulfill the following property:
            \begin{align*}
                \gamma \in \mathbb{R}^d     &\qquad \longmapsto   &\{\lambda_1(\gamma), \ldots, \lambda_N(\gamma)\}  &\in \{A \subset \mathbb{R}^N\} \\
                \gamma \in \mathbb{R}^d     &\qquad \longmapsto   &\{\eta_1(\gamma), \ldots, \eta_N(\gamma)\}        &\in \{A \subset \mathbb{CP}^{N - 1}\}
            \end{align*}
        are also $L$-periodic. In other words, the set of eigenvalues $\{\lambda_1(\gamma), \ldots, \lambda_N(\gamma)\}$ and the set of eigenspaces $\{\eta_1(\gamma), \ldots, \eta_N(\gamma)\}$ are $L$-periodic. However, the maps $\lambda_n: \mathbb{R}^d \rightarrow \mathbb{C}^*$ do not need to be $L$-periodic. Since there are only finitely many eigenvalues and since the set of eigenvalues is $L$-periodic, it follows that for every $n \in \{1,\ldots,N\}$ there exists a lattice $L_n \subset L$ such that $\lambda_n$ is $L_n$-periodic.
        Choosing this lattice to be a maximal sublattice $L_n \subset L$ such that $\lambda_n$ is $L_n$-periodic, makes the lattice $L_n$ unique and thus allows to associated the following well-defined topological invariant to the eigenvalue $\lambda_n: \mathbb{R}^d \rightarrow \mathbb{C}^*$.

        \begin{definition}
            Let 
                \begin{align*}
                    \left\{\frac{d}{dt}x = A(\alpha,t)x \right\}_{\alpha \in \mathbb{T}^d}
                \end{align*}
            be a periodically parameterized periodic lODE and let $T$ be the period of $A(\gamma,\cdot)$. If the monodromy matrix $X_\alpha(T)$ is locally $C^1$-diagonalizable and $(\lambda_1(\gamma), \ldots, \lambda_N(\gamma))$ is a continuously differentiable parameterization of the characteristic multipliers, then the \emph{Type I.a topological invariant} associated to $\lambda_n$ is defined as the quotient
                $$ L/L_n, $$
            where $L \subset \mathbb{R}$ is the lattice associated to the parameter space $ \mathbb{T}^d$  of the parameterized lODE and $L_n \subset L$ is the maximal sublattice of $L$ such that $\lambda_n$ is $L_n$-periodic.
        \end{definition}

        This topological effect indicates whether the different bands corresponding to the characteristic multipliers 'brade'. We call \emph{brading} the effect shown in Figure \ref{fig:brading} for the case where the parameter space is one-dimensional. Brading describes the effect when different bands $\lambda_n$ and $\lambda_{n'}$ are not $L$-periodic but only the set $\{\lambda_n(\gamma),\lambda_{n'}(\gamma)\}$ is. In that case, we say that $\lambda_n$ and $\lambda_{n'}$ brade. This effect can involve multiple bands, see e.g. subsection  \ref{subsec:TypeI.a_top_eff}.

        \begin{figure}[h]
            \caption{Non-trivial Type I.a topological invariant, also called brading.}\label{fig:brading}
            \centering
            \includegraphics{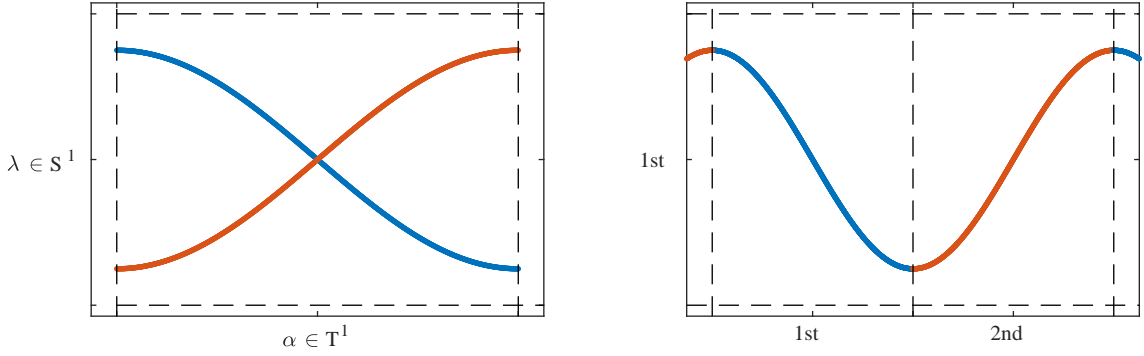}
        \end{figure}

        For a given band $\lambda_n$ with corresponding maximal sublattice $L_n \subset L$, it is possible to interpret it as well as its associated eigenspace $\eta_n$ as functions
            \begin{align*}
                \lambda_n :     &\quad\mathbb{R}^d/L_n     &\longrightarrow    &\quad\quad\mathbb{C}^*,      \\
                \eta_n :        &\quad\mathbb{R}^d/L_n     &\longrightarrow    &\quad\quad\mathbb{CP}^{N-1}, \\
            \end{align*}
        on the torus $\mathbb{T}^d_n := \mathbb{R}^d/L_n$ and to consider the associated homotopy classes of those functions. This leads to the following two definitions.

        \begin{definition}[Type I.b homotopy class]\label{def:TypeIb}
            Let 
                \begin{align*}
                    \left\{\frac{d}{dt}x = A(\alpha,t)x \right\}_{\alpha \in \mathbb{T}^d}
                \end{align*}
            be a periodically parameterized periodic lODE and let $T$ be the period of $A(\gamma,\cdot)$. If the monodromy matrix $X_\alpha(T)$ is locally $C^1$-diagonalizable and $(\lambda_1(\gamma), \ldots, \lambda_N(\gamma))$ is a continuously differentiable parameterization of the characteristic multipliers and $(\eta_1(\gamma), \ldots, \eta_N(\gamma))$ are the corresponding parameterized eigenspaces, then the \emph{Type I.b homotopy class} associated to $\eta_n$ is defined as the homotopy class
            \begin{align*}
                \eta_n \in  [\mathbb{T}^d_n, \mathbb{CP}^{N-1}],
            \end{align*}
            where $\mathbb{T}^d_n$ is the torus associated to $\lambda_n$, that is, $\mathbb{T}^d_n$ is defined as $\mathbb{R}^d/L_n$ where $L_n \subset L$ is the maximal sublattice such that $\lambda_n$ is $L_n$-periodic.
        \end{definition}

        \begin{definition}[Type II.a homotopy class]
            Let 
                \begin{align*}
                    \left\{\frac{d}{dt}x = A(\alpha,t)x \right\}_{\alpha \in \mathbb{T}^d}
                \end{align*}
            be a periodically parameterized periodic lODE and let $T$ be the period of $A(\gamma,\cdot)$. If the monodromy matrix $X_\alpha(T)$ is locally $C^1$-diagonalizable and $(\lambda_1(\gamma), \ldots, \lambda_N(\gamma))$ is a continuously differentiable parameterization of the characteristic multipliers, then the \emph{Type II.a homotopy class} associated to $\lambda_n$ is defined as the homotopy class
            \begin{align*}
                \lambda_n \in  [\mathbb{T}^d_n, \mathbb{C}^*],
            \end{align*}
            where $\mathbb{T}^d_n$ is the torus associated to $\lambda_n$, that is, $\mathbb{T}^d_n$ is defined as $\mathbb{R}^d/L_n$ where $L_n \subset L$ is the maximal sublattice such that $\lambda_n$ is $L_n$-periodic.
        \end{definition}

        The Type I and Type II terminology is due to the nature of the topological effects. Topological effects I.a and I.b can already occur in the case of a time-independent coefficient matrix $A(t,\gamma) = A_{const}(\gamma)$, whereas Type II topological invariants are unique to the setting when the coefficient matrix $A(t,\gamma)$ is time-dependent. 
        
        The topological invariants which have been defined so far are all linked to the Floquet exponent matrix $F$ or actually the monodromy matrix $X(T,\cdot)$ and are independent of the Lyapunov transformation $P(t,\gamma)$. However, the Type II.a homotopy class will indicate whether $P$ can be defined periodically with respect to $\gamma$.

        The reason why a non-trivial Type II.a invariant implies non-periodic Lyapunov transformation becomes clear already when one considers a one-dimensional parameter set $\mathbb{T}^1$ of the parameterized periodic lODE $\{\frac{d}{dt}x = A(\alpha,t)x\}_{\alpha \in \mathbb{T}^1}$. Assume $\lambda_n(\gamma)$ is a characteristic multiplier of the parameterized lODE and is $L_n$-periodic with $L_n \subset L$ and has non-trivial homotopy class $\lambda_n \in [\mathbb{R}/L_n, \mathbb{C}^*]$. This means that the image of the circle $\mathbb{R}/L_n$ winds around the origin in $\mathbb{C}^*$ and that is why we also say that the band associated to a characteristic multiplier \emph{winds} or has \emph{non-trivial winding} whenever that characteristic multiplier has non-trivial Type II.a.

        \begin{figure}[h]
            \caption{Non-trivial Type II.a topological invariant, also called winding.}\label{fig:winding}
            \centering
            \includegraphics{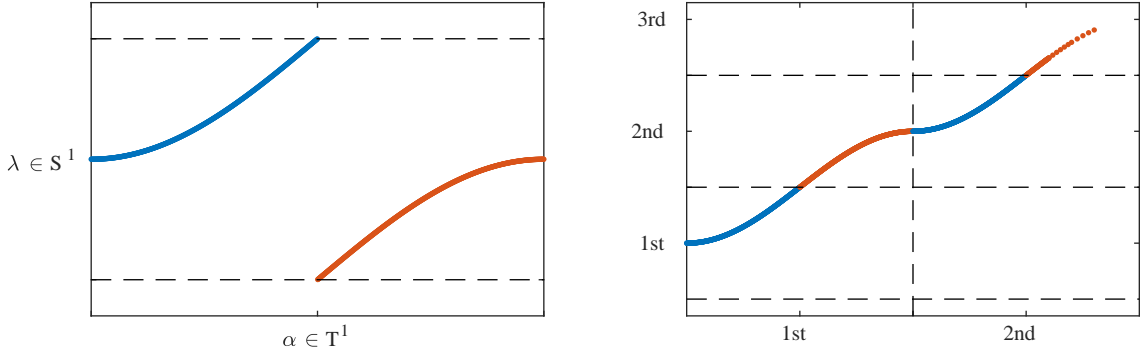}
        \end{figure}

        Winding has the following effect on the Floquet exponent matrix or more precisely on the characteristic exponents $\mu_n$. The characteristic exponents are obtained via lifting the characteristic multipliers in the following way 
            \begin{align*}
                \begin{xy}
                    \xymatrix{
                                                                                &&& \mathbb{C} \ar[dd]^{\exp(T(\cdot))}    \\
                        \mathbb{R}^{d} \ar[urrr]^{\mu_n} \ar[drrr]_{\lambda_n}                        \\
                                                                                &&& \mathbb{C}^*.
                    }
                \end{xy}
            \end{align*}
        A non-trivial winding of $\lambda_n$ will then lead to a non-periodic lift $\mu_n$. This phenomenon is depicted in Figure \ref{fig:winding}, namely, $\mu_n$ gains a non-zero phase for every fundamental domain of $L_n$ that it traverses, leading to a linearly increasing characteristic exponent. This then implies that also the Floquet exponent matrix $F$ will not be periodic but will have growing/decaying eigenvalues. Finally this property propagates to the Lyapunov transformation $P$. In the case of non-trivial winding also $P$ will not be periodic with respect to $\gamma \in \mathbb{R}$.

        Since, to our knowledge, the phenomenon of non-trivial winding in the case of subwavelength solutions for Floquet metamaterials hasn't been observed yet, we will restrict this work to the case where the Type II.a homotopy class is trivial. A sufficient condition for this assumption to be valid would be that the coefficient matrix $A(\gamma,t)$ commutes with its derivative $\frac{d}{dt}A(\gamma,t)$ for all $(\gamma,t) \in \mathbb{R}^{d+1}$.

        \begin{lemma}[Periodic Lyapunov transform]\label{lem:periodic_Lyapunov_transform}
            Let $A: \mathbb{R}^d \times \mathbb{R} \rightarrow \Mat_{N\times N}(\mathbb{C})$ be continuously differentiable function such that
                \begin{align*}
                    A(\gamma,\cdot) \text{ is } T\text{-periodic for all } \gamma \in \mathbb{R}^d \text{ and such that }
                    A(\cdot,t) \text{ is } L\text{-periodic for all } t \in \mathbb{R},
                \end{align*}
            where $L \subset \mathbb{R}$ is a lattice.
            Let $X_\gamma(t)$ be the fundamental solution associated to the parameterized lODE
                \begin{align*}
                    \left\{\frac{d}{dt}X = A(\gamma,t)\right\}_{\gamma \in \mathbb{R}^d},
                \end{align*}
            and assume that the monodromy matrix
                \begin{align*}
                    X(\cdot,T): \quad     &\mathbb{R}^d   &\longrightarrow    &\hspace{1cm} \GL_N(\mathbb{C}),\\
                                \quad     &\gamma         &\longmapsto        &\hspace{1cm} X(\gamma,t)
                \end{align*}
            is locally $C^1$-diagonalizable for all $\gamma \in \mathbb{R}^d$. Let $(\lambda_1(\gamma),\ldots,\lambda_N(\gamma))$ denote a continuously differentiable parameterization of the characteristic multipliers with corresponding eigenspaces $(\eta_1(\gamma),\ldots,\eta_N(\gamma))$ and maximal lattices $(L_1,\ldots,L_N)$. Let $(\mu_1(\gamma),\ldots,\mu_N(\gamma))$ be a continuously differentiable choice of characteristic exponents and assume that for all $n \in \{1,\ldots,N\}$ the Type II.a of $\lambda_n$ is trivial. Then the associated Floquet exponent matrix $F(\gamma)$ and the corresponding Lyapunov transform $P(\gamma,t)$ are $\tilde{L}$-periodic, where
                $$\tilde{L} = \bigcap_{n=1}^N L_n.$$
        \end{lemma}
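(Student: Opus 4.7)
The plan is to mirror the construction of the Floquet normal form in Theorem \ref{thm:C1-param_FNF_for_Rd-param_lODE}, handling in turn the characteristic exponents, the Floquet exponent matrix, and the Lyapunov transform. Concretely, I would first show that each $\mu_n$ is $L_n$-periodic (this is the only step where the triviality of the Type II.a class is needed), then upgrade this to $\tilde L$-periodicity of $F(\gamma)$, and finally deduce $\tilde L$-periodicity of $P(\gamma,t)$ by combining this with $L$-periodicity of $A(\cdot,t)$.

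\textbf{Step 1 (exponents).} Each $\lambda_n$ descends to a map $\mathbb{T}^d_n \to \mathbb{C}^*$ whose homotopy class is trivial by assumption. Since $\exp(T\,\cdot\,):\mathbb{C}\to\mathbb{C}^*$ is the universal covering, null-homotopy of $\lambda_n$ is equivalent to the existence of a continuous (and, by the $C^1$ regularity of $\lambda_n$, continuously differentiable) lift $\bar\mu_n:\mathbb{T}^d_n\to\mathbb{C}$ with $\exp(T\bar\mu_n)=\lambda_n$. Pulling back along $\mathbb{R}^d\to\mathbb{T}^d_n$ yields an $L_n$-periodic lift of $\lambda_n$, which differs from the chosen $\mu_n$ by a continuous $\tfrac{2\pi i}{T}\mathbb{Z}$-valued function; this function is constant on the connected space $\mathbb{R}^d$, so $\mu_n$ itself is $L_n$-periodic, hence $\tilde L$-periodic.

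\textbf{Steps 2 and 3 ($F$ and $P$).} Fix $\ell\in\tilde L=\bigcap_n L_n$. Since $\tilde L\subset L$ and $A(\cdot,t)$ is $L$-periodic, uniqueness of the fundamental solution gives $X(\gamma+\ell,t)=X(\gamma,t)$ for every $t\in\mathbb{R}$, in particular at $t=T$. Moreover $\lambda_n(\gamma+\ell)=\lambda_n(\gamma)$ since $\ell\in L_n$, and $\mu_n(\gamma+\ell)=\mu_n(\gamma)$ by Step 1. On the open set where the multipliers are pairwise distinct, each eigenspace $\eta_n$ is uniquely pinned down by (matrix, eigenvalue), so $\eta_n(\gamma+\ell)=\eta_n(\gamma)$, and the formula $F(\gamma)=\eta(\gamma)\,\diag(\mu_1,\dots,\mu_N)\,\eta(\gamma)^{-1}$ gives $F(\gamma+\ell)=F(\gamma)$. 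On the closed locus of coalescing eigenvalues the individual $\eta_n$ need not be pinned down, but the spectral projection summed over any coalescing cluster is determined by $X(\gamma,T)$ alone and the corresponding $\mu_n$'s must agree within the cluster, so continuity extends the identity $F(\gamma+\ell)=F(\gamma)$ to all of $\mathbb{R}^d$. Finally, $P(\gamma,t)=X(\gamma,t)\exp(-tF(\gamma))$ is $\tilde L$-periodic as a product of $\tilde L$-periodic factors.

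\textbf{Main obstacle.} The delicate point is the crossing discussion in Step 2: the $C^1$ labeling $(\lambda_n,\eta_n)$ is only asserted locally, and at eigenvalue collisions one cannot directly deduce periodicity of a single $\eta_n$. A clean way to sidestep case analysis is a uniqueness argument: $F(\gamma)$ is characterized as the unique matrix satisfying $\exp(TF(\gamma))=X(\gamma,T)$ together with the prescribed continuous choice of eigenvalues $\mu_n(\gamma)$; since all of this defining data is $\tilde L$-invariant, $F$ must be $\tilde L$-periodic everywhere, and $P$ then inherits the periodicity for free.
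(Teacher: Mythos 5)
Your proposal follows essentially the same route as the paper: the triviality of the Type II.a class lets you lift each $\lambda_n$ through the universal cover $\exp(T\,\cdot\,):\mathbb{C}\to\mathbb{C}^*$ to an $L_n$-periodic exponent $\mu_n$, and the $\tilde L$-periodicity of $F$ and $P$ then follows from the explicit formulas $F=\eta\,\diag(\mu)\,\eta^{-1}$ and $P(\gamma,t)=X_\gamma(t)\exp(-tF)$ together with the $L$-periodicity of $X$. You are in fact somewhat more careful than the paper, which silently assumes the $\tilde L$-periodicity of the eigenspaces $\eta_n$ and does not discuss eigenvalue crossings at all (though note your claim that the $\mu_n$ must agree within a coalescing cluster is not automatic, since two crossing multipliers may carry exponents differing by a nonzero multiple of $2\pi i/T$).
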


        \begin{proof} It needs to be proven that the $(\mu_1(\gamma),\ldots,\mu_N(\gamma))$ defines a $\tilde{L}$-periodic map. Indeed, this being proven it follows from 
            \begin{align*}
                F(\gamma)~ ~ &= \begin{pmatrix}
                                    | & &| \\
                                    \eta_1 &\ldots &\eta_N \\
                                    | & &| \\
                                \end{pmatrix}\begin{pmatrix}
                                    \mu_1 & & \\
                                    &\ddots & \\
                                    & &\mu_N \\
                                \end{pmatrix}\begin{pmatrix}
                                    | & &| \\
                                    \eta_1 &\ldots &\eta_N \\
                                    | & &| \\
                                \end{pmatrix}^{-1} \text{ and }  \\
                P(\gamma,t) &= X_\gamma(t)\exp(-tF)
            \end{align*}
            that also $F$ and $P$ are $\tilde{L}$-periodic.
            
            Let $n \in \{1,\ldots,N\}$. Then $\mu_n$ is characterized by the equation $\exp(\mu_n) = \lambda_n$ and its value $\mu_n(1)$. Interpreting $\exp: \mathbb{C} \rightarrow \mathbb{C}^*$ as a universal cover, $\mu_n: \mathbb{R}^d \rightarrow \mathbb{C}$ is then given as a lifting of $\lambda_n: \mathbb{R}^d \rightarrow \mathbb{C}^*$. Since $\lambda_n$ is $L_n$-periodic and contractible, it follows that also $\mu_n$ is $L_n$-periodic. Thus proving that $(\mu_1(\gamma),\ldots,\mu_N(\gamma))$ is $\tilde{L}$-periodic.
        \end{proof}

        \begin{definition}[Type II.b homotopy class]\label{def:TypeIIb}
            Let $A: \mathbb{R}^d \times \mathbb{R} \rightarrow \Mat_{N\times N}(\mathbb{C})$ be continuously differentiable function such that
            \begin{align*}
                A(\gamma,\cdot) \text{ is } T\text{-periodic for all } \gamma \in \mathbb{R}^d \text{ and such that }
                A(\cdot,t) \text{ is } L\text{-periodic for all } t \in \mathbb{R},
            \end{align*}
        where $L \subset \mathbb{R}$ is a lattice.
        Let $X_\gamma(t)$ be the fundamental solution associated to the parameterized lODE
            \begin{align*}
                \left\{\frac{d}{dt}X = A(\gamma,t)\right\}_{\gamma \in \mathbb{R}^d},
            \end{align*}
        and assume that the monodromy matrix
            \begin{align*}
                X(\cdot,T): \quad     &\mathbb{R}^d   &\longrightarrow    &\hspace{1cm} \GL_N(\mathbb{C}),\\
                            \quad     &\gamma         &\longmapsto        &\hspace{1cm} X(\gamma,t)
            \end{align*}
        is locally $C^1$-diagonalizable for all $\gamma \in \mathbb{R}^d$. Let $(\lambda_1(\gamma),\ldots,\lambda_N(\gamma))$ denote a continuously differentiable parameterization of the characteristic multipliers with corresponding eigenspaces $(\eta_1(\gamma),\ldots,\eta_N(\gamma))$ and maximal lattices $(L_1,\ldots,L_N)$. Let $(\mu_1(\gamma),\ldots,\mu_N(\gamma))$ be a continuously differentiable choice of characteristic exponents and assume that for all $n \in \{1,\ldots,N\}$ the Type II.a of $\lambda_n$ is trivial, then the \emph{Type II.b homotopy class} associated to the corresponding Lyapunov transform $P$ is defined as the homotopy class 
                $$ P \in [\mathbb{R}^d/\tilde{L} \times \mathbb{R}/T\mathbb{Z}, \GL_N(\mathbb{C})],$$
                where $\tilde{L} := \cap_{1=n}^N L_n$ is a lattice in $\mathbb{R}^d$.
        \end{definition}

        Using polar decomposition of matrices and the fact that the space of positive definite matrices is contractible, it is possible to rephrase the definition of the Type II.b homotopy class. Indeed, it suffices to examine the homotopy class of the unitary part $U(\gamma,t)$ of $P(\gamma,t)$ in 
            $$ [\mathbb{R}^d/L \times \mathbb{R}/T\mathbb{Z}, \U(N)],$$
        where $\U(N)$ denotes the space of $N\times N$-dimensional unitary matrices.
        Denoting by $\SU(N)$ the space of special unitary $N \times N$-dimensional matrices and using the diffeomorphism
            \begin{align*}
                \U(N) \quad\quad\quad   &\longrightarrow        &&\qquad\qquad\qquad\quad \mathbb{S}^1 \times \SU(N) \\
                V \quad\quad\quad       &\longmapsto            &&\left(\det(V), \begin{pmatrix} 
                                                                    | & | &  &| \\
                                                                    \frac{1}{\det(V)}V^{(1)} &V^{(2)} & \ldots &V^{(N)} \\
                                                                    | & | &  &| 
                                                                \end{pmatrix} \right),
            \end{align*}
        the homotopy class of $P \in [\mathbb{R}^d/L \times \mathbb{R}/T\mathbb{Z}, \GL_N(\mathbb{C})]$ is uniquely determined by the associated homotopy classes in $[\mathbb{R}^d/L \times \mathbb{R}/T\mathbb{Z}, \mathbb{S}^1]$ and $[\mathbb{R}^d/L \times \mathbb{R}/T\mathbb{Z}, \SU(N)]$. More precisely, the homotopy class of $P$ is uniquely determined by the functions
            \begin{align*}
                \frac{\det(P(\gamma,t)}{\abs{\det(P(\gamma,t)}} \in C^0(\mathbb{R}^d/L \times \mathbb{R}/T\mathbb{Z}, \mathbb{S}^1) 
            \end{align*}
        and
            \begin{multline}\label{eq:def_of_SU_part}
                S(\gamma,t) :=\\ \begin{pmatrix} 
                    | & | &  &| \\
                    \frac{\lvert \det(P) \rvert}{\det(P)}(P (P^*P)^{-\frac{1}{2}})^{(1)} &(P (P^*P)^{-\frac{1}{2}})^{(2)} & \ldots &(P (P^*P)^{-\frac{1}{2}})^{(N)} \\
                    | & | &  &| 
                \end{pmatrix} \in C^0(\mathbb{R}^d/L \times \mathbb{R}/T\mathbb{Z}, \SU(N)).
            \end{multline}
        
        The following lemma will further simplify the homotopy class of the $\det(P)/\abs{\det(P)}$. Namely, it will be proven that the homotopy class of $\det(P)/\abs{\det(P)}$ solely depends on $\det(P(\gamma,\cdot))/\abs{\det(P(\gamma,\cdot))} \in C^0(\mathbb{R}/T\mathbb{Z}, \mathbb{S}^1)$ for any choice of fixed $\gamma \in \mathbb{R}^d/\tilde{L}$.
        
        \begin{lemma}\label{lem:homotopy_class_of_Delta_o_P}
            Let $A:\mathbb{R}^d \times \mathbb{R} \rightarrow \Mat_{N\times N}(\mathbb{C})$ be a $L \times T\mathbb{Z}$-periodic, continuously differentiable function. Let $P: \mathbb{R}^d/L \times \mathbb{R}/T\mathbb{Z} \rightarrow \GL_N(\mathbb{C})$ be the Lyapunov transform obtained in Lemma \ref{lem:periodic_Lyapunov_transform} and define
            \begin{align*}
                \Delta:\quad\quad       &\GL_N(\mathbb{C})  &\longrightarrow    &\qquad\qquad\qquad\qquad\mathbb{S}^1\\
                                        &M                  &\longmapsto        &\qquad\qquad\qquad\qquad\frac{\det(M)}{\abs{\det(M)}},\\
                \\
                \iota:\quad\quad        &\mathbb{S}^1       &\longrightarrow    &\qquad\qquad\qquad\qquad\mathbb{T}^{d+1}\\
                                        &\overline{t}       &\longmapsto        &\qquad\qquad\qquad\qquad(0,\ldots,0,\overline{t}).
            \end{align*}
            Then the homotopy class of $\Delta \circ P = \frac{\det(P)}{\abs{\det(P)}} \in C^0(\mathbb{R}^d/L \times \mathbb{R}/T\mathbb{Z}, \mathbb{S}^1)$ is uniquely determined by $\Delta \circ P \circ \iota$ in $\pi^1(\mathbb{S}^1)$.           
        \end{lemma}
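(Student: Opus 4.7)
The plan is to reduce the claim to the standard classification of maps into $\mathbb{S}^1$ via winding numbers along coordinate circles. Since $\mathbb{S}^1$ is an Eilenberg--MacLane space $K(\mathbb{Z},1)$, one has the natural isomorphism
$$[\mathbb{T}^{d+1}, \mathbb{S}^1] \;\cong\; H^1(\mathbb{T}^{d+1}, \mathbb{Z}) \;\cong\; \mathbb{Z}^{d+1},$$
which is concretely realized by sending a map $f: \mathbb{T}^{d+1} \to \mathbb{S}^1$ to the tuple of winding numbers obtained by restricting $f$ to the $d+1$ standard coordinate circles $\iota_k: \mathbb{S}^1 \hookrightarrow \mathbb{T}^{d+1}$, $k = 1, \ldots, d+1$, where $\iota_{d+1}$ coincides with the map $\iota$ from the statement. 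It therefore suffices to show that, for $f = \Delta \circ P$, the winding numbers along $\iota_1, \ldots, \iota_d$ all vanish, so that only the winding along $\iota$ survives.

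The key input is the initial condition built into the fundamental solution: by construction $X_\gamma(0) = \Id_N$, and hence
$$P(\gamma, 0) = X_\gamma(0)\exp(-0 \cdot F(\gamma)) = \Id_N$$
for every $\gamma$ in the parameter torus. Consequently $\det(P(\gamma,0)) = 1$ identically in $\gamma$, and therefore, for each $k \in \{1, \ldots, d\}$, the composition $\Delta \circ P \circ \iota_k$ is the constant map with value $1 \in \mathbb{S}^1$ and has winding number zero.

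Combining the two observations, the tuple in $\mathbb{Z}^{d+1}$ classifying the homotopy class of $\Delta \circ P$ is of the form $(0, \ldots, 0, n)$, where $n = \deg(\Delta \circ P \circ \iota) \in \pi_1(\mathbb{S}^1)$ is the winding of the time-slice. Hence the class of $\Delta \circ P$ in $[\mathbb{T}^{d+1}, \mathbb{S}^1]$ is uniquely determined by $\Delta \circ P \circ \iota \in \pi_1(\mathbb{S}^1)$. The only non-routine aspect of the argument is the concrete identification of the classifying isomorphism $[\mathbb{T}^{d+1}, \mathbb{S}^1] \cong \mathbb{Z}^{d+1}$ in terms of coordinate-circle windings; this follows either from the Künneth formula applied to $H^1(\mathbb{T}^{d+1}, \mathbb{Z})$, or directly by lifting $f$ through the universal cover $\mathbb{R} \to \mathbb{S}^1$ and inspecting the integer shifts accumulated along each coordinate direction.
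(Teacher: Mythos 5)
Your argument is correct and follows essentially the same route as the paper: both reduce the homotopy class of a map $\mathbb{T}^{d+1}\to\mathbb{S}^1$ to its restrictions to the $d+1$ coordinate circles and then use $P(\gamma,0)=\Id_N$ to conclude that the $d$ spatial restrictions are nullhomotopic. Your write-up merely supplies more detail for the classification step (via $[\mathbb{T}^{d+1},\mathbb{S}^1]\cong H^1(\mathbb{T}^{d+1};\mathbb{Z})\cong\mathbb{Z}^{d+1}$), which the paper asserts without proof.
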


        \begin{proof}
            In fact, the homotopy class of a continuous map $f: \mathbb{T}^{d+1} \rightarrow \mathbb{S}^1$ is uniquely determined by the homotopy classes of $f\circ \iota_j : \mathbb{S}^{1} \rightarrow \mathbb{S}^1$, where $\iota_j: \mathbb{S}^1 \hookrightarrow \mathbb{T}^{d+1}\cong (\mathbb{S}^1)^{d+1}$ is the inclusion in the $j$th factor. Thus  the homotopy class of $\Delta \circ P$ is also uniquely determined by the homotopy classes of the maps $\Delta \circ P \circ \iota_1, \ldots, \Delta \circ P \circ \iota_{d+1} \in C^0(\mathbb{S}^1,\mathbb{S}^1)$. Since $P(\alpha,0) = \Id_N$ for all $\alpha \in \mathbb{T}^d$ it follows that the maps $\Delta \circ P \circ \iota_1, \ldots, \Delta \circ P \circ \iota_{d}$ are homotopically trivial and thus the homotopy class of $\Delta \circ P$ is uniquely determined by $\Delta \circ P \circ \iota_{d+1} = \Delta \circ P \circ \iota$.
        \end{proof}

        \begin{remark}\label{rem:theory_real_coef_mat}
            This argument (i.e. Lemma \ref{lem:homotopy_class_of_Delta_o_P} and the preceeding discussion) also holds in the setting where $F$ can chosen to be real-valued. In that case, $P(\alpha,t)$ takes values in $\GL_N(\mathbb{R})$ and one can replace $\U(N)$ and $\SU(N)$ by $\Orth(N)$ and $\SO(N)$ (the spaces of orthogonal and special orthogonal $N \times N$-dimensional matrices), respectively.
        \end{remark}

%%%%%%%%%%%%%%%%%%%%%%%%%%%%%%%%%%%%%%%%%%%%%%%%%%%%%%%%%%%%%%%%%%%%%%%%%%%%%%%%%%%%%%%%%%%%%%%%%%%%%%%%%%%%%%%%%%%%%%%%%%%%%%%%%%%%%%%%%%%
%%%%%%%%%%%%%%%%%%%%%%%%%%%%%%%%%%%%%%%%%%%%%%%%%%%%%% \section{Applications} %%%%%%%%%%%%%%%%%%%%%%%%%%%%%%%%%%%%%%%%%%%%%%%%%%%%%%%%%%%%%
%%%%%%%%%%%%%%%%%%%%%%%%%%%%%%%%%%%%%%%%%%%%%%%%%%%%%%%%%%%%%%%%%%%%%%%%%%%%%%%%%%%%%%%%%%%%%%%%%%%%%%%%%%%%%%%%%%%%%%%%%%%%%%%%%%%%%%%%%%%

\section{Application to high-contrast hexagonal  structures}\label{sec:Application_to_high-contrast_acoustic_hexagonal_metamaterial_structures}
    In this section the derived topological invariants will be applied to the setting of a hexagonal structure, like it is present in graphene.
    Highly symmetric structures as honeycomb or square lattice structures allow for a reduction of the two dimensional Brillouin zone to a one dimensional symmetry curve (see Figure \ref{fig:brioullin_zone}). This then reduces the parameterization space from a two dimensional torus $\mathbb{T}^2$ to a one-dimensional circle $\mathbb{S}^1$, which in return limits the variety of homotopic effects, as will be displayed in subsection \ref{subsec:S1_param_top_invar}.
    
    In the following section we will present the setting for which we will apply the derived topological invariants. In the subsequent section we will briefly explain the computational procedure we use to compute the Floquet-Lyapunov decomposition, the Floquet exponents and modes.  Thereafter, examples of a time-modulated honeycomb structures which have nontrivial Type I.a Topological Invariants will be presented.

    When the setting will be presented it will become apparent that we are in the case of a second order lODE which is parameterized by a one-dimensional parameter space $\mathbb{S}^1$. That is why the defined topological invariants are analysed closer in subsection \ref{subsec:S1_param_top_invar}.
    
    % That is, we are in the setting of the last line of Table \ref{tab:homot_type}, which says that the topological nature of the Floquet-Lyapunov decomposition of an lODE which is parameterized by a circle is uniquely determined by its Type I.a Topological Invariant.
    % It thus follows that the Type I.a Topological Invariant will determine the topological nature of the subwavelength solutions of the wave equation in an  hexagonal structure.

    %%%%%%%%%%%%%%%%%%%%%%%%%%%%%%%%%%%%%%%%%%%%%%%%%%%%%%%%%%%%%%%%%%%%%%%%%%%%%%%%%%%%%%%%%%%%%%%%%%%%%%%%%%%%%%%%%%%%%%%%%%%%%%%%%%%%%%%%%%%
    %%%%%%%%%%%%%%%%%%%%%%%%%%%%%%%%%%%%%%%%%%%%%%%%%%%%%%%%% \section{Setting} %%%%%%%%%%%%%%%%%%%%%%%%%%%%%%%%%%%%%%%%%%%%%%%%%%%%%%%%%%%%%%%
    %%%%%%%%%%%%%%%%%%%%%%%%%%%%%%%%%%%%%%%%%%%%%%%%%%%%%%%%%%%%%%%%%%%%%%%%%%%%%%%%%%%%%%%%%%%%%%%%%%%%%%%%%%%%%%%%%%%%%%%%%%%%%%%%%%%%%%%%%%%
        \subsection{Setting}\label{subsec:metamaterial_setting}
            A metamaterial is a prototype material of the following form. It is composed of two submaterials: the \emph{background} and the \emph{resonator material}. While the background material fills almost the whole space $\mathbb{R}^d$, in this case we will restrict us to $2$-dimensional space $\mathbb{R}^2$, the resonator material only occupies disconnected domains $D_1, \ldots, D_N  \subset \mathbb{R}^2$, which are repeated periodically with respect to some $\mathbb{R}$-linearly independent lattice vectors $g_1, g_2 \in \mathbb{R}^2$, which generate the lattice $G = g_1\mathbb{Z} \oplus g_2\mathbb{Z}$, such that the domain of the resonator material is given by
            $$\mathcal{D} =\bigcup_{g \in G}(g + D_1 \cup \ldots \cup D_N),$$
            see e.g. Figure \ref{fig:hexagonal_structure_metamaterial} for the case of a hexagonal structure material.
            The \emph{reciprocal lattice} is then defined as
                \begin{align*}
                    L := \left\{l \in \mathbb{R}^2 | <l,g>\, \in 2\pi\mathbb{Z} \text{ for all } g \in G \right\},
                \end{align*}
            where $<l,g>$ denotes the standard scalar product in $\mathbb{R}^2$.
            Background and resonator material are characterized by their corresponding material parameters $\rho$ and $\kappa$ which correspond to the density and the bulk modulus in the setting of scalar acoustic waves. To be precise, the density $\rho$ and the bulk modulus $\kappa$ are defined as 
                \begin{align} \label{eq:resonatormod}
                    \kappa(x,t) &=   \begin{cases}
                                        \kappa_0,           & x \in \R^2 \setminus \overline{\mathcal{D}}, \\
                                        \kappa_n(t),        & x\in g + D_n,\text{ with }  n \in \{1,\ldots,N\}, \, g \in G
                                    \end{cases} \\
                    \rho(x,t)   &=   \begin{cases}
                                        \rho_0,             & x \in \R^2 \setminus \overline{\mathcal{D}}, \\
                                        \rho_n(t),          & x\in g + D_n,\text{ with }  n \in \{1,\ldots,N\}, \, g \in G.
                                    \end{cases}
                \end{align}
            That is, the density $\rho$ and the bulk modulus $\kappa$ are piecewise constant in space and also in time in the domain of the background material. However, the material parameters are time-dependent inside the resonators.
            The goal is to study subwavelength solutions to the  wave equation with time-dependent coefficients 
                \begin{equation}\label{eq:wave}
                    \left(\frac{\p }{\p t } \frac{1}{\kappa(x,t)} \frac{\p}{\p t} - \nabla_x \cdot \frac{1}{\rho(x,t)} \nabla_x\right) u(x,t) = 0, 
                    \quad x\in \R^2,\, t\in \R.
                \end{equation}
            To this end we will restrict ourselves to the study of its associated \emph{subwavelength quasifrequencies}, when $\kappa$ and $\rho$ are $T$-periodic in time, in which case we understand the following by subwavelength quasifrequencies.   

            When the wave equation \eqref{eq:wave} is periodic in time and space, one can apply the Floquet transform with respect to time and space to the wave equation \eqref{eq:wave}. This leads to a parameterized set of problems with restricted solution spaces. Indeed, one obtains
            \begin{equation} \label{eq:wave_transf}
                \begin{cases}\ \ds \left(\frac{\p }{\p t } \frac{1}{\kappa(x,t)} \frac{\p}{\p t} - \nabla_x \cdot \frac{1}{\rho(x,t)} \nabla_x\right) u(x,t) = 0,\\ 
               \nm
                    \	u(x,t)e^{-i \omega t} \text{ is $T$-periodic in $t$}, \\
                    \nm
                    \	u(x,t)e^{-i \alpha \cdot x} \text{ is $G$-periodic in $x$}, 
                \end{cases}
            \end{equation}
            where $\omega$ ranges over the elements of the \emph{time-Brillouin zone} $Y_t^* := \mathbb{C} / (\Omega \Z)$ with $\Omega$ being the frequency of the material parameters, which is thus given by $\Omega = {2\pi}/{T}$, and $\alpha$ ranges over elements of the \emph{Brillouin zone} $\mathbb{R}^2/L$. If a solution $u(x,t)$ to \eqref{eq:wave_transf} exists for an $\omega \in Y_t^*$ and $\alpha \in \mathbb{R}^2/L$, then $u(x,t)$ is called a \emph{Bloch solution} and $\omega$ its associated \emph{(time-)quasifrequency} and $\alpha$ its associated \emph{(spatial) quasifrequency}.

            In order to study \emph{subwavelength} (time-)quasifrequencies, one needs to assume that the \emph{contrast parameter} 
            $$\delta := \frac{\rho_i(0)}{\rho_0}$$
            is small,\footnote{Supposing that $\rho_n(0) = \rho_{n'}(0)$ for all $n,n' \in \{1,\ldots,N\}$.} or, to be precise, one needs to consider solutions to the wave equation \eqref{eq:wave} as $\delta \rightarrow 0$.
            Assuming 
            \begin{equation} \label{eq:resonatormod_rho}
                \rho(x,t) = \begin{cases}
                                \rho_0,             & x \in \R^2 \setminus \overline{\mathcal{D}}, \\ 
                                \rho_r\rho_n(t),    & x\in g + D_n,\text{ with }  n \in \{1,\ldots,N\}, \, g \in G,
                            \end{cases}
            \end{equation}
            with $\rho_n(0) = 1$ for all $n = 1,\ldots, N$, one can regard the wave equation \eqref{eq:wave} as parameterized by the contrast parameter $\delta$ and one can consider its solutions as $\delta \rightarrow 0$. This is called the \emph{high contrast regime}. In the setting where the modulation frequency $\Omega$ may also depend on $\delta$, subwavelength frequencies are introduced as in \cite{ammari2020time} and are given by the following definition.

            \begin{definition}[Subwavelength quasifrequency] \label{def:sub}
                A quasifrequency $\omega = \omega(\delta) \in Y^*_t$ of \eqref{eq:wave_transf} is said to be a \emph{subwavelength quasifrequency} if there is a corresponding Bloch solution $u(x,t)$, depending continuously on $\delta$, which can be written as
                $$u(x,t)= e^{i \omega t}\sum_{m = -\infty}^\infty v_m(x)e^{i m\Omega t},$$
                where 
                $$\omega(\delta) \rightarrow 0 \in Y_t^* \text{ and }  M(\delta)\Omega(\delta) \rightarrow 0  \in \mathbb{R} \text{ as }  \delta \to 0,$$
                for some integer-valued function $M=M(\delta)$ such that, as $\delta \to 0$, we have
                $$\sum_{m = -\infty}^\infty \|v_m\|_{L^2(D)} = \sum_{m = -M}^M \|v_m\|_{L^2(D)} + o(1).$$
            \end{definition}

            In \cite{ammari2020time}, a capacitance matrix formulation to the subwavelength quasifrequencies as $\delta \rightarrow 0$ was proven. It describes subwavelength quasifrequencies as solutions to a (finite dimensional) linear system of equations in the setting where the density and bulk modulus are constant with respect to time. In the setting of time-dependent density and bulk modulus, the subwavelength quasifrequencies are described by a periodically parameterized lODE.
            
            In order to state this result, we need the following definitions of the \emph{time-dependent contrast parameters}, \emph{wave speed} and \emph{time-dependent wave speeds}
                \begin{align*}
                    \delta_n(t) = \frac{\rho_n(t)}{\rho_0}, 
                    \quad v_0 = \sqrt{\frac{\kappa_0}{\rho_0}}, 
                    \quad v_n(t) = \sqrt{\frac{\kappa_n(t)}{\rho_n(t)}},    
                \end{align*}
            respectively, with $n =1,\ldots,N$.

            \begin{theorem}\label{thm:EriksMasterEquation}
                Being in the high contrast regime of \eqref{eq:wave}, assume that the material parameter $\kappa$ is given by \eqref{eq:resonatormod}, that $\rho$ is given by \eqref{eq:resonatormod_rho} and that they satisfy 
                    $$\frac{1}{\rho_n(t)} = \sum_{m = -M}^M r_{n,m} e^{i m \Omega t}, \qquad \frac{1}{\kappa_n(t)} = \sum_{m = -M}^M k_{n,m} e^{i m \Omega t},$$
                for some $M(\delta)\in \N$ satisfying $M(\delta) = O\left(\delta^{-\gamma/2}\right)$ as $\delta \rightarrow 0$ for some fixed $0<\gamma<1$. Furthermore, suppose that the associated time-dependent contrast parameters, wave speed and time-dependent wave speeds satisfy for all $t\in \R$ and $n=1,\ldots,N$,
                    $$\delta_n(t) = O(\delta), \quad v = O(1), \quad v_n(t) = O(1) \quad \text{as } \hspace{0.2cm} \delta \rightarrow 0,$$
                respectively.
                Then, as $\delta \to 0$, the subwavelength quasifrequencies $\omega(\delta) \in Y^*_t$ to the wave equation \eqref{eq:wave} in the high contrast regime are, to leading order in $\delta$, given by the quasifrequencies of the system of ordinary differential equations in $y(t) = (y_n(t))_n$,
                    \begin{equation}\label{eq:C_lODE}
                        \sum_{m=1}^N C^\alpha_{nm} y_m(t) = -\frac{|D_n|}{\delta_n(t)}\frac{\dx}{\dx t}\left(\frac{1}{\delta_nv_n^2}\frac{\dx (\delta_ny_n)}{\dx t}\right),
                    \end{equation}
                for $n=1,\ldots,N$, where $C^\alpha =(C^\alpha_{nm})_{n,m}$ denotes the capacitance matrix associated to the infinite periodic structure of the considered metamaterial and the spatial quasifrequency $\alpha \in \mathbb{R}^2/L$.
            \end{theorem}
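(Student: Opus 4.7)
The plan is to adapt the capacitance-matrix approach of \cite{ammari2020time} to the time-modulated setting. First, I would substitute the subwavelength ansatz $u(x,t) = e^{i\omega t}\sum_{m} v_m(x) e^{im\Omega t}$ from Definition \ref{def:sub} into the Bloch form \eqref{eq:wave_transf} of the wave equation. Using the finite $(2M+1)$-band Fourier expansions of $1/\rho_n(t)$ and $1/\kappa_n(t)$, this reduces the problem to a banded infinite system of $\alpha$-quasi-periodic Helmholtz-type equations for the Fourier modes $v_m$, with exterior wavenumbers $k_m^{(0)} = (\omega+m\Omega)/v_0$ and effective interior wavenumbers $k_m^{(n)}$ inside each resonator $D_n$.

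Next, I would represent each exterior $v_m$ by a quasi-periodic single-layer potential for the Helmholtz operator at wavenumber $k_m^{(0)}$, and apply the transmission conditions $[u] = 0$ and $[\rho^{-1}\partial_\nu u] = 0$ on each $\partial D_n$ to produce boundary integral equations for the layer densities. In the high-contrast limit $\delta \to 0$, the factor $\delta$ appearing in the Neumann transmission condition forces the leading-order interior solution to be spatially constant on each resonator. Denoting the time-reassembled constants by $y_n(t)$, one tests the exterior normal derivative against the capacitance-defining functions $(\mathcal{S}_D^\alpha)^{-1}[\chi_{\partial D_m}]$; the low-frequency asymptotics of the quasi-periodic Neumann--Poincar\'e operator then produce precisely the term $\sum_m C^\alpha_{nm} y_m(t)$ on the left-hand side of \eqref{eq:C_lODE}. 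The right-hand side arises by integrating $\partial_t(\kappa^{-1}\partial_t u)$ over $D_n$, exploiting $u \approx y_n(t)$ inside $D_n$ together with the scaling \eqref{eq:resonatormod_rho}; this yields the volume factor $|D_n|$ and the nested derivative $\frac{d}{dt}\bigl(\frac{1}{\delta_n v_n^2}\frac{d(\delta_n y_n)}{dt}\bigr)$.

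The principal obstacle is uniform control of the infinite Fourier sum over $m$ as $\delta \to 0$. For $|m|$ large, the wavenumbers $k_m^{(0)}$ are no longer subwavelength, so the low-frequency expansion of the quasi-periodic Neumann--Poincar\'e operator that underlies the capacitance reduction is not valid mode-by-mode. The assumptions $M(\delta) = O(\delta^{-\gamma/2})$ with $0<\gamma<1$, together with the tail condition $\sum_{m}\|v_m\|_{L^2(D)} = \sum_{|m|\le M}\|v_m\|_{L^2(D)} + o(1)$ from Definition \ref{def:sub} and the bandedness of the Fourier coefficients of $1/\rho_n$ and $1/\kappa_n$, are precisely what make this truncation admissible: only $O(M(\delta))$ modes interact nontrivially, and the discarded tail contributes only $o(1)$ to the leading-order system \eqref{eq:C_lODE}. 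Making these operator asymptotics and the mode truncation quantitatively precise, and checking that the converse direction (every ODE quasifrequency approximates a Bloch quasifrequency to leading order) holds, is the technical core of the argument.
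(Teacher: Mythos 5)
The paper gives no proof of Theorem \ref{thm:EriksMasterEquation}: it is imported verbatim from \cite{ammari2020time}, whose derivation — layer potential representations, Gohberg--Sigal theory, the capacitance matrix, and Floquet--Bloch theory to pass to the infinite periodic structure — is exactly the route you outline, including the role of the truncation $M(\delta)=O(\delta^{-\gamma/2})$ in controlling the non-subwavelength Fourier modes. Your sketch is therefore consistent with the source of the result; the remaining work you correctly identify (quantitative operator asymptotics and the mode truncation) is carried out in that reference, not in this paper.
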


            The \emph{capacitance matrix} is a way to encode the geometry of  an infinite periodic structure into a square matrix. It has the same dimension as the total number of resonators in a fundamental domain. The capacitance matrix theory in the high contrast regime was developed in \cite{ammari2021functional}. It was first derived using \emph{Gohberg-Sigal theory} and \emph{layer potential techniques} for the finite structure case, where the resonators $D_1,\ldots, D_N$ are not repeated periodically in space, but where the resonator domain $\mathcal{D}\subset \mathbb{R}^3$ is given by $\mathcal{D} = D_1\cup\ldots\cup D_N$. Then, \emph{Floquet-Bloch theory} allowed to extend the results to  infinite structures. 

            One can rewrite \eqref{eq:C_lODE} into the following system of Hill equations:
            \begin{equation}\label{eq:hill}
                \Psi''(t)+ M(t)\Psi(t)=0,
            \end{equation}
            where the vector $\Psi$ and the matrix $M$ are defined as
                $$\Psi(t) = \left(\frac{\sqrt{\delta_n(t)}}{v_n(t)}y_n(t)\right)_{n=1}^N, \quad M(t) = W_1(t)C^\alpha W_2(t) + W_3(t)$$
            with $W_1, W_2$ and $W_3$ being diagonal matrices with corresponding diagonal entries
                $$\left(W_1\right)_{nn} = \frac{v_n\delta_n^{3/2}}{|D_n|}, \qquad \left(W_2\right)_{nn} =\frac{v_n}{\sqrt{\delta_n}}, \qquad \left(W_3\right)_{nn} = \frac{\sqrt{\delta_n}v_n}{2}\frac{\dx }{\dx t}\frac{1}{(\delta_nv_n^2)^{3/2}}\frac{\dx (\delta_nv_n^2)}{\dx t},$$
            for $n=1,\ldots,N$.

%%%%%%%%%%%%%%%%%%%%%%%%%%%%%%%%%%%%%%%%%%%%%%%%%%%%%%%%%%%%%%%%%%%%%%%%%%%%%%%%%%%%%%%%%%%%%%%%%%%%%%%%%%%%%%%%%%%%%%%%%%%%%%%%%%%%%%%%%%%
%%%%%%%%%%%%%%%%%%%%% \section{Computation of continuous parametrization of Floquet exponents and Floquet modes} %%%%%%%%%%%%%%%%%%%%%%%%%%
%%%%%%%%%%%%%%%%%%%%%%%%%%%%%%%%%%%%%%%%%%%%%%%%%%%%%%%%%%%%%%%%%%%%%%%%%%%%%%%%%%%%%%%%%%%%%%%%%%%%%%%%%%%%%%%%%%%%%%%%%%%%%%%%%%%%%%%%%%%

            \subsection{Computation of continuous parametrization of Floquet exponents and Floquet modes}\label{subsec:Computation_of_continuous_parametrization_of_Floquet_exponents_and_Floquet_modes}
            Since from a computational point of view, it is costly to compute matrix exponentials, one needs to develop another procedure to work around this problem. Since in the case of the Type I topological invariants, one needs to compute the Floquet exponents and Floquet modes, it is convenient to reuse those for the computation of $\exp(tF_\alpha)$ and, more importantly, for the computation of the Lyapunov transform $P(\alpha,t) := X_\alpha(t)\exp(-tF_\alpha)$. The details are given in the following lemma. 

            \begin{lemma}\label{lem:comp_for_cont_floq_lyap_decomp}
                Let 
                \begin{align*}
                    \left\{\frac{d}{dt}x = A(\alpha,t)x \right\}_{\alpha \in \mathbb{T}^d}
                \end{align*}
            be a periodically parameterized periodic lODE and let $T$ be the period of $A(\gamma,\cdot)$. If the monodromy matrix $X_\alpha(T)$ is locally $C^1$-diagonalizable. Let $\tilde{L}\subset L$ be a maximal lattice as in Lemma \ref{lem:periodic_Lyapunov_transform} and let 
                    \begin{align*}
                        \Lambda: \; \mathbb{R}^d/\tilde{L} \longrightarrow  \mathbb{C}^N
                    \end{align*}
                be a continuously differentiable map that associates to each $\gamma \in \mathbb{R}^d$ the characteristic multipliers $\Lambda(\gamma)=(\Lambda_1(\gamma), \ldots, \Lambda_N(\gamma))$. Furthermore, denote by 
                    \begin{align*}
                        \eta: \; \mathbb{R}^d  \longrightarrow  (\mathbb{CP}^{N-1})^N
                    \end{align*}
                a continuously differentiable map that associates to each $\gamma \in \mathbb{R}^d$ the eigenspaces $\eta(\gamma)=(\eta_1(\gamma), \ldots, \eta_N(\gamma))$, such that $\eta_n(\gamma)$ is the eigenspace corresponding to the eigenvalue $\Lambda_n(\gamma)$.
                Then the (continuously differentiable and periodic) Floquet-Lyapunov decomposition of $X_\alpha(t)$
                    \begin{align*}
                        X_\alpha(t) = P(\alpha,t)\exp\left(tF_\alpha\right),
                    \end{align*}
                as in Lemma \ref{lem:comp_for_cont_floq_lyap_decomp} is given by
                    \begin{align*}
                        F_\alpha &= \frac{1}{T}\int_0^TA(\alpha,t)dt,\\
                        P(\alpha,t) &= X_\alpha(t)\exp\left(-tF_\alpha\right)
                    \end{align*}
                and can be computed using the Floquet exponent matrix which satisfies 
                    \begin{align}\label{eq:F_eig_vect_val}
                        F_\alpha = V\diag\left(\frac{1}{T}\Lambda(\alpha)\right)V^{-1},
                    \end{align}
                with 
                    \begin{align*}
                        V = \begin{pmatrix}
                            |& &|\\
                            v_1 &\ldots &v_N\\
                            |& &|
                        \end{pmatrix},
                    \end{align*} 
                where $v_n$ is some generator of the vector space $V_n = \eta_n(\alpha)$ for $1 \leq n \leq N$. The Lyapunov transformation can then be computed as 
                    \begin{align}\label{eq:comp_P}
                        P(\alpha,t) = X_\alpha(t)V\diag\left(\exp\left(-\frac{1}{T}\Lambda(\alpha)\right)\right)V^{-1},
                    \end{align}
                where $\exp$ denotes the `usual' complex exponential function applied to each coordinate of $\Lambda(\alpha)$, and not the matrix analogue.
            \end{lemma}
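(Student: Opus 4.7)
The plan is to verify the stated formulas as direct consequences of the Floquet-theoretic construction recalled in Subsection \ref{subsec:Continuously_differentiably_parameterized_Floquet_normal_form}, under the paper's standing hypothesis that the Type II.a invariants are trivial. As noted in the text preceding Lemma \ref{lem:periodic_Lyapunov_transform}, a sufficient condition is that $A(\alpha,t)$ commutes with its time-derivative, which in turn implies that $A(\alpha,t)$ commutes with its own primitive. Under that condition the Magnus expansion truncates and the fundamental solution is
$$X_\alpha(t) = \exp\!\Bigl(\int_0^t A(\alpha,s)\,ds\Bigr).$$
Evaluating at $t = T$ gives $X_\alpha(T) = \exp(T F_\alpha)$ for $F_\alpha := \tfrac{1}{T}\int_0^T A(\alpha,s)\,ds$, so $F_\alpha$ is indeed a Floquet exponent matrix in the sense of Subsection \ref{subsec:Continuously_differentiably_parameterized_Floquet_normal_form}, and the stated $P(\alpha,t) := X_\alpha(t)\exp(-tF_\alpha)$ is its associated Lyapunov transform by definition. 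The required continuity, differentiability, and $\tilde{L}$-periodicity in $\alpha$ are then precisely what Lemma \ref{lem:periodic_Lyapunov_transform} delivers.

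For the spectral decomposition \eqref{eq:F_eig_vect_val}, I would observe that $F_\alpha$ and $X_\alpha(T) = \exp(T F_\alpha)$ commute and can therefore be simultaneously diagonalized. Since $X_\alpha(T)$ is locally $C^1$-diagonalizable with eigenvectors $v_n$ spanning the eigenspaces $\eta_n(\alpha)$ and eigenvalues $\Lambda_n(\alpha)$, the same $v_n$ diagonalize $F_\alpha$, with corresponding eigenvalues given by the characteristic exponents $\mu_n(\alpha)$ satisfying $\exp(T\mu_n(\alpha)) = \Lambda_n(\alpha)$. Part 2 of Theorem \ref{thm:C1-param_FNF_for_Rd-param_lODE} provides a continuously differentiable lift of these exponents, and Lemma \ref{lem:periodic_Lyapunov_transform} gives their $\tilde{L}$-periodicity. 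Arranging the $v_n$ as the columns of $V$ then produces the displayed factorization, with the understanding that the diagonal entries written as $\tfrac{1}{T}\Lambda(\alpha)$ are to be read as the characteristic exponents $\mu_n(\alpha) = \tfrac{1}{T}\log \Lambda_n(\alpha)$ for the branch selected in Theorem \ref{thm:C1-param_FNF_for_Rd-param_lODE}.

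Finally, formula \eqref{eq:comp_P} for $P(\alpha,t)$ is obtained by substituting the factorization of $F_\alpha$ into $P(\alpha,t) = X_\alpha(t)\exp(-tF_\alpha)$ and applying the elementary identities $\exp(-t V D V^{-1}) = V \exp(-tD) V^{-1}$ for diagonal $D$, together with the fact that the exponential of a diagonal matrix is the diagonal of entrywise exponentials. The main point requiring care is the bookkeeping of logarithm branches when passing from the characteristic multipliers $\Lambda_n(\alpha)$ to the characteristic exponents $\mu_n(\alpha)$ — a globally defined, continuously differentiable, $\tilde{L}$-periodic choice must exist for $F_\alpha$ and $P(\alpha,t)$ to inherit these properties — but this is exactly the content of Theorem \ref{thm:C1-param_FNF_for_Rd-param_lODE}(2) combined with Lemma \ref{lem:periodic_Lyapunov_transform}, so no new machinery beyond what is already established in the paper is required.
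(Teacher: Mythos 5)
Your verification of the diagonalization formula \eqref{eq:F_eig_vect_val} and of \eqref{eq:comp_P} via the conjugation identity $\exp(-tVDV^{-1}) = V\exp(-tD)V^{-1}$ is exactly the paper's own (two-line) proof, and your reading of the diagonal entries $\tfrac{1}{T}\Lambda(\alpha)$ as the characteristic exponents $\mu_n = \tfrac{1}{T}\log\Lambda_n$ for the branch fixed in Theorem \ref{thm:C1-param_FNF_for_Rd-param_lODE}(2), rather than as the multipliers themselves, correctly repairs a notational slip in the statement. Where you genuinely depart from the paper is in attempting to prove the identity $F_\alpha = \tfrac{1}{T}\int_0^T A(\alpha,t)\,dt$: the paper's proof is silent on this formula altogether and only verifies the two displayed equations \eqref{eq:F_eig_vect_val} and \eqref{eq:comp_P}.

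That added step is also where the gap lies. The truncation of the Magnus expansion, i.e.\ $X_\alpha(t) = \exp\bigl(\int_0^t A(\alpha,s)\,ds\bigr)$ and hence $F_\alpha = \tfrac{1}{T}\int_0^T A$, requires that $A(\alpha,t)$ commute with its primitive $\int_0^t A(\alpha,s)\,ds$. You derive this from the condition $[A,\partial_t A]=0$, but (i) that condition is not among the lemma's hypotheses --- the paper offers it only as a \emph{sufficient} condition for triviality of the Type II.a classes, and trivial Type II.a does not conversely imply it --- and (ii) even granting it, the step ``commutes with its derivative, hence commutes with its primitive'' is not automatic: differentiating $[A(t),\int_0^t A(s)\,ds]$ yields $[\partial_tA(t),\int_0^t A(s)\,ds]$, which the pointwise hypothesis $[A,\partial_tA]=0$ does not force to vanish. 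To be fair, without \emph{some} such commutativity assumption the formula $F_\alpha = \tfrac{1}{T}\int_0^T A$ is false in general, so the lemma as stated needs exactly the repair you are attempting; but as written your justification does not close it. The route consistent with the rest of the paper is to take \eqref{eq:F_eig_vect_val} as the \emph{definition} of $F_\alpha$ (as in part 3 of Theorem \ref{thm:C1-param_FNF_for_Rd-param_lODE}), for which your simultaneous-diagonalization and conjugation arguments go through verbatim, and to flag the integral expression as valid only under an explicitly stated additional commutativity hypothesis.
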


            \begin{proof}
                Equation \eqref{eq:F_eig_vect_val} is simply a diagonalization of $F_\alpha$ and equation \eqref{eq:comp_P} is due to the fact that
                $$ \exp(-tF_\alpha) = \exp\left(V\diag\left(\frac{1}{T}\Lambda(\alpha)\right)V^{-1}\right) = V\diag\left(\exp\left(-\frac{1}{T}\Lambda(\alpha)\right)\right)V^{-1}.$$
            \end{proof}

            The above lemma is particularly useful when one wants to compute the Lyapunov transform numerically. In fact, the procedure we chose for this paper is the following. We computed the fundamental solution $X_\alpha(t)$ using an appropriate numerical method, then we estimated the eigenvalues $\xi_1(\alpha), \ldots, \xi_N(\alpha)$ with respective eigenvectors $v_1(\alpha), \ldots, v_N(\alpha)$ of $X_\alpha(T)$ and parameterized them in a continuously differentiable manner. Since, with the notation from subsection \ref{subsec:Continuously_differentiably_parameterized_Floquet_normal_form}, $\Xi(\beta) = (\xi_1(\beta), \ldots, \xi_N(\beta))$ relates to $\Lambda(\beta)$ in the following fashion
                \begin{align*}
                    \Xi(\beta) = \exp\left(\Lambda(\beta)\right), \text{ for } \beta \in \mathbb{R}^N/\tilde{L}
                \end{align*}
            we took the appropriate logarithm branch for each $\xi_n(\beta)$ in order to obtain
                \begin{align*}
                    V(\beta)\log(\Xi(\beta))V(\beta)^{-1} = \int_0^TA(\beta,t)dt.
                \end{align*}
            Thus choosing 
                \begin{align*}
                    \Lambda(\beta) = \log(\Xi(\beta)),
                \end{align*}
            one can compute $F(\alpha)$ as 
                \begin{align*}
                    F_\alpha = V(\beta)\frac{1}{T}\Lambda(\beta) V(\beta)^{-1}, \text{ where } \alpha = \zeta(\beta)
                \end{align*}
            and the Lyapunov transform as
                \begin{align*}
                    P(\alpha,t) = X_\alpha(t)V(\beta)\exp\left(-\frac{t}{T}\Lambda(\beta)\right)V(\beta)^{-1},
                \end{align*}
            where it doesn't matter which $\beta \in \mathbb{R}^N/\tilde{L}$ is chosen, as long as $\zeta(\beta) = \alpha$ is satisfied.

%%%%%%%%%%%%%%%%%%%%%%%%%%%%%%%%%%%%%%%%%%%%%%%%%%%%%%%%%%%%%%%%%%%%%%%%%%%%%%%%%%%%%%%%%%%%%%%%%%%%%%%%%%%%%%%%%%%%%%%%%%%%%%%%%%%%%%%%%%%
%%%%%%%%%%%%%%%%%%%%% \section{Computation of continuous parametrization of Floquet exponents and Floquet modes} %%%%%%%%%%%%%%%%%%%%%%%%%%
%%%%%%%%%%%%%%%%%%%%%%%%%%%%%%%%%%%%%%%%%%%%%%%%%%%%%%%%%%%%%%%%%%%%%%%%%%%%%%%%%%%%%%%%%%%%%%%%%%%%%%%%%%%%%%%%%%%%%%%%%%%%%%%%%%%%%%%%%%%

            \subsection{Analysis of topological effects in the setting of high-constrast metamaterials}\label{subsec:S1_param_top_invar}

            This section is dedicated to the analysis of the case where the lODE $\frac{d}{dt}X = A_\alpha(t)X$ is parameterized by $\alpha \in \mathbb{S}^1$, that is, where the parameter space $\mathbb{T}^d$ is of dimension $d = 1$.
            
            We will see that a one-dimensional parameter space considerably restricts the topological properties of the Floquet normal form of the lODE 
            \begin{align*}
                \left\{\frac{d}{dt}X = A_\alpha(t) X\right\}_{\alpha \in \mathbb{S}^1}.
            \end{align*}
            It will be proven that in that case only the Type I.a Topological Invariant remains of interest and that the Type II.b topological invariant is uniquely determined by the function $\det(P)/\lvert \det(P) \rvert$. In other words, the Type I.b and the homotopy class of $S(\gamma,t)$ are automatically trivial whenever the parameter space is equal to $\mathbb{S}^1$.
            Both results are due to the homotopical properties of $\mathbb{CP}^{N-1}$ and $\SU(N)$, respectively. 

            Considering the Type I.b Homotopy Class first, the following result holds.

            \begin{lemma}\label{lem:TypeIb_triv_for_d_1}
                Let 
                    \begin{align*}
                        \left\{\frac{d}{dt}x = A(\alpha,t)x \right\}_{\alpha \in \mathbb{T}^d}
                    \end{align*}
                be a periodically parameterized periodic lODE and let $T$ be the period of $A(\gamma,\cdot)$. If the monodromy matrix $X_\alpha(T)$ is locally $C^1$-diagonalizable for all $\alpha \in \mathbb{S}^1$, then the associated lODE 
                    $$\left\{\frac{d}{dt}X = A_\alpha(t)X\right\}_{\mathbb{\alpha} \in \mathbb{S}^1\cong \mathbb{R}/L}$$
                has trivial Type I.b Homotopy Class.
            \end{lemma}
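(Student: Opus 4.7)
The plan is to unpack the definition and reduce the statement to a basic fact from algebraic topology. By Definition \ref{def:TypeIb}, the Type I.b Homotopy Class associated to $\eta_n$ is the free homotopy class
$$\eta_n \in [\mathbb{T}^1_n, \mathbb{CP}^{N-1}],$$
where $\mathbb{T}^1_n = \mathbb{R}/L_n$ is a one-dimensional torus, hence diffeomorphic to $\mathbb{S}^1$. Therefore the claim reduces to showing that the set of free homotopy classes $[\mathbb{S}^1, \mathbb{CP}^{N-1}]$ is trivial, which is equivalent to $\pi_1(\mathbb{CP}^{N-1}) = 0$.

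First I would verify that the hypotheses of Theorem \ref{thm:C1-param_FNF_for_Rd-param_lODE} are in place so that a continuously differentiable parameterization $(\eta_1(\gamma),\ldots,\eta_N(\gamma))$ of the eigenspaces exists; this ensures the Type I.b class is well-defined. Next, since $L_n \subset L \subset \mathbb{R}$ is a rank-one sublattice, the quotient $\mathbb{R}/L_n$ is homeomorphic to the circle $\mathbb{S}^1$, so it remains only to observe that any continuous map $\mathbb{S}^1 \to \mathbb{CP}^{N-1}$ is null-homotopic.

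To establish $\pi_1(\mathbb{CP}^{N-1}) = 0$ I would invoke the Hopf fibration $\mathbb{S}^1 \hookrightarrow \mathbb{S}^{2N-1} \to \mathbb{CP}^{N-1}$ and extract the relevant portion of its long exact sequence of homotopy groups:
$$\pi_1(\mathbb{S}^{2N-1}) \longrightarrow \pi_1(\mathbb{CP}^{N-1}) \longrightarrow \pi_0(\mathbb{S}^1).$$
For $N \geq 2$, both outer groups vanish ($\mathbb{S}^{2N-1}$ is simply connected and $\mathbb{S}^1$ is path-connected), forcing $\pi_1(\mathbb{CP}^{N-1}) = 0$; for $N = 1$, $\mathbb{CP}^0$ is a point and the statement is trivial. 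Alternatively, one can cite the standard CW decomposition of $\mathbb{CP}^{N-1}$ with cells only in even dimensions, which gives the same conclusion.

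There is no real obstacle here — the proof is essentially unwinding a definition and applying a well-known fact. The only mild subtlety worth noting in passing is that $[\mathbb{S}^1, \mathbb{CP}^{N-1}]$ denotes \emph{free} homotopy classes rather than pointed ones, but since $\mathbb{CP}^{N-1}$ is simply connected (hence $\pi_1$ acts trivially on itself by conjugation), the two notions coincide and both are trivial. Concluding, every $\eta_n$ is null-homotopic and the Type I.b Homotopy Class is trivial, as claimed.
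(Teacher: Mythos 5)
Your proof is correct and follows essentially the same route as the paper's: unwind Definition \ref{def:TypeIb}, identify $\mathbb{R}/L_n$ with $\mathbb{S}^1$, and conclude from the simple connectedness of $\mathbb{CP}^{N-1}$ that $[\mathbb{S}^1,\mathbb{CP}^{N-1}]$ is a single point. The paper simply cites simple connectedness where you additionally justify it via the Hopf fibration and note the free-versus-pointed subtlety, but the argument is the same.
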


            \begin{proof}
                Let
                \begin{align*}
                    \eta = (\eta_1,\ldots,\eta_N): \quad \mathbb{R} \longrightarrow (\mathbb{CP}^{N-1})^N
                \end{align*}
                be as in Definition \ref{def:TypeIb}. That is, let $\eta$ be a continuous lifting of the characteristic multipliers $(\lambda_1,\ldots,\lambda_N)$. Recall that the Type I.b homotopy class associated to $\lambda_n$ was defined as the homotopy classes 
                \begin{align*}
                    \eta_n \in [\mathbb{R}/\tilde{L},\mathbb{CP}^{N-1}] \cong [\mathbb{S}^1,\mathbb{CP}^{N-1}],
                \end{align*}
                with $1 \leq n\leq N$.
                Since complex projective space $\mathbb{CP}^{N-1}$ is simply connected for $N \geq 1$, it follows that $[\mathbb{S}^1,\mathbb{CP}^{N-1}]$ consists of precisely one element and thus the Type I.b Homotopy Class of $\left\{\frac{d}{dt}X = A_\alpha(t)X\right\}_{\alpha \in \mathbb{S}^1}$ is always trivial.
            \end{proof}

            The argument for the Type II.b Homotopy Class is similar. It relies on the fact that $\SU(N)$ is 1- and 2-connected for all $N \geq 1$.

            \begin{lemma}\label{lem:TypeII_triv_for_d_1}
                Let 
                \begin{align*}
                    \left\{\frac{dx}{dt} = A(\alpha,t)x \right\}_{\alpha \in \mathbb{T}^d}
                \end{align*}
            be a periodically parameterized periodic lODE and let $T$ be the period of $A(\gamma,\cdot)$. Assume the setting of Definition \ref{def:TypeIIb} and using the notation of Lemma \ref{lem:homotopy_class_of_Delta_o_P}. Then the Type II.b Homotopy Class of the associated lODE 
                    $$\left\{\frac{dX}{dt} = A_\alpha(t)X\right\}_{\mathbb{\alpha} \in \mathbb{S}^1\cong \mathbb{R}/L}$$
                is uniquely determined by the homotopy class $\Delta\circ P \iota \in \pi^1(\mathbb{S}^1)$.
            \end{lemma}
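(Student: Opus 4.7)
The plan is to combine the polar decomposition reduction performed immediately above the statement with the fact that the parameter-time torus is only two-dimensional when $d=1$, which forces the non-abelian part of the unitary factor to be homotopically trivial. Concretely, since $d=1$, the domain $\mathbb{R}/\tilde L \times \mathbb{R}/T\mathbb{Z}$ is the $2$-torus $\mathbb{T}^2$. By the discussion preceding Lemma \ref{lem:homotopy_class_of_Delta_o_P}, polar decomposition together with the diffeomorphism $\U(N)\cong \mathbb{S}^1 \times \SU(N)$ identifies the Type II.b homotopy class of $P$ with the pair
\[
   \bigl(\,[\Delta\circ P] \in [\mathbb{T}^2,\mathbb{S}^1],\ [S] \in [\mathbb{T}^2,\SU(N)]\,\bigr).
\]
Hence it suffices to show that the second factor contains only the trivial class, while the first factor is already handled by the cited lemma.

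For the $\mathbb{S}^1$-factor, Lemma \ref{lem:homotopy_class_of_Delta_o_P} (specialized to $d=1$) asserts directly that the homotopy class of $\Delta\circ P$ is uniquely determined by $\Delta\circ P\circ\iota\in \pi^1(\mathbb{S}^1)$, so there is nothing more to do there.

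For the $\SU(N)$-factor, I would argue via obstruction theory that every continuous map $S:\mathbb{T}^2 \to \SU(N)$ is null-homotopic. Give $\mathbb{T}^2$ its standard CW structure with one $0$-cell, two $1$-cells and one $2$-cell. Since $\SU(N)$ is path connected and $\pi_1(\SU(N)) = 0$, the restriction of $S$ to the $1$-skeleton $\mathbb{S}^1\vee \mathbb{S}^1$ is null-homotopic; applying the homotopy extension property one may replace $S$ within its homotopy class by a map which is constant on $\mathbb{S}^1\vee \mathbb{S}^1$. Such a map factors through the collapse
\[
   \mathbb{T}^2 \twoheadrightarrow \mathbb{T}^2/(\mathbb{S}^1\vee \mathbb{S}^1) \cong \mathbb{S}^2 \longrightarrow \SU(N).
\]
The classical fact that $\pi_2(G)=0$ for every finite-dimensional Lie group $G$ (in particular for $G=\SU(N)$) then forces the factored map, and hence $S$ itself, to be null-homotopic. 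Combining this with the reduction of the $\mathbb{S}^1$-factor gives the claim.

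The only real step is the obstruction-theoretic reduction; it rests on two well-known vanishing results ($\pi_1(\SU(N))=0$ and $\pi_2(\SU(N))=0$) and on the homotopy extension property, so once these are invoked the argument proceeds routinely. The main thing to be careful about is the base-point bookkeeping when homotoping $S|_{\mathbb{S}^1\vee \mathbb{S}^1}$ to a constant map, but since $\SU(N)$ is path connected and simply connected this choice is irrelevant up to homotopy.
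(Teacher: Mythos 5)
Your proposal is correct and follows essentially the same route as the paper: reduce via polar decomposition and the splitting $\U(N)\cong\mathbb{S}^1\times\SU(N)$ to the pair of classes $[\Delta\circ P]$ and $[S]$, handle the first by Lemma \ref{lem:homotopy_class_of_Delta_o_P}, and kill the second using the $1$- and $2$-connectedness of $\SU(N)$. The only difference is that you spell out the cell-by-cell obstruction argument showing that these connectivity facts force every map $\mathbb{T}^2\to\SU(N)$ to be null-homotopic, a step the paper simply asserts.
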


            \begin{proof}
                Let $P$ be as in Lemma \ref{lem:periodic_Lyapunov_transform} and let $S(\alpha,\overline{t}) \in C^0(\mathbb{R}/L \times \mathbb{R}/T\mathbb{Z}, \SU(N))$ be obtained from $P(\alpha,\overline{t})$ as indicated in equation \eqref{eq:def_of_SU_part}. Recall that the Type II.b Homotopy Class associated to the lODE $\left\{\frac{d}{dt}X = A_\alpha(t)X\right\}_{\alpha \in \mathbb{S}^1}$ is uniquely determined by the homotopy class $\Delta\circ P \iota \in \pi^1(\mathbb{S}^1)$ and the homotopy class of $S$ in $ C^0(\mathbb{R}/L \times \mathbb{R}/T\mathbb{Z}, \SU(N))$. In the case of a one-dimensional parameter space $\mathbb{S}^1$, the space $C^0(\mathbb{R}/L \times \mathbb{R}/T\mathbb{Z}, \SU(N))$ is homeomorphic to $C^0(\mathbb{T}^2, \SU(N))$. However, $\SU(N)$ is 1- and 2-connected for all $n \geq 1$ and thus every continuous function $\mathbb{T}^2 \rightarrow \SU(N)$ is homotopic to a constant function. In particular, $S \in C^0(\mathbb{R}/L \times \mathbb{R}/T\mathbb{Z}, \SU(N))$ is always homotopically trivial. In other words, the  Type II.b Homotopy Class associated to the lODE $\left\{\frac{d}{dt}X = A_\alpha(t)X\right\}_{\mathbb{\alpha} \in \mathbb{S}^1\cong \mathbb{R}/L}$ is uniquely determined by $\Delta\circ P \iota \in \pi^1(\mathbb{S}^1)$.
            \end{proof}

%%%%%%%%%%%%%%%%%%%%%%%%%%%%%%%%%%%%%%%%%%%%%%%%%%%%%%%%%%%%%%%%%%%%%%%%%%%%%%%%%%%%%%%%%%%%%%%%%%%%%%%%%%%%%%%%%%%%%%%%%%%%%%%%%%%%%%%%%%%
%%%%%%%%%%%%%%%%%%%%%%%%%%%%%%%%%%%%%%%%%%%%%%%% \section{Type I.a homotopic effects} %%%%%%%%%%%%%%%%%%%%%%%%%%%%%%%%%%%%%%%%%%%%%%%%%%%%%
%%%%%%%%%%%%%%%%%%%%%%%%%%%%%%%%%%%%%%%%%%%%%%%%%%%%%%%%%%%%%%%%%%%%%%%%%%%%%%%%%%%%%%%%%%%%%%%%%%%%%%%%%%%%%%%%%%%%%%%%%%%%%%%%%%%%%%%%%%%
            \subsection{Type I.a homotopic effects}\label{subsec:TypeI.a_top_eff}

            Applying the above theory to the setting of a hexagonal structure  one can observe non-trivial instances of the Type I.a Topological Invariant associated to the corresponding parameterized lODE \eqref{eq:hill}. To the best of our knowledge, non-trivial Type II.b haven't been observed yet, indicating that the Type I.a invariant fully characterises the homotopy class of subwavelength solutions in high-contrast time-dependent metamaterials.

            \begin{figure}[h]
                \centering
                \includegraphics{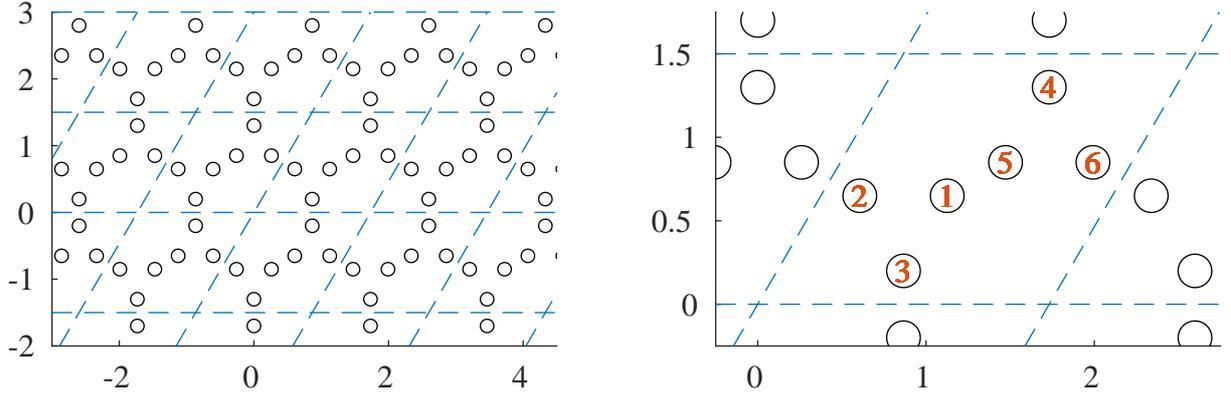}
                \caption{Displayed is the hexagonal structure used in subsection \ref{subsec:TypeI.a_top_eff}. The lattice is displayed with blue dashed lines and the resonators are displayed as black outlined circles. In the right figure, the resonators inside the fundamental domain $\{a_1g_1 + a_2g_2 | a_1, a_2 \in [0,1) \}$ are displayed and numbered according to the notation used in subsection \ref{subsec:TypeI.a_top_eff}. One can distinguish the two trimers: resonators 1, 2, 3 and resonators 4, 5, 6.  }
                \label{fig:hexagonal_structure_metamaterial}
            \end{figure}
                        
            Two structures of non-trivial homotopy type that are of nontrivial Type I.a will be presented in the following section. The \emph{static} structure will in both cases be given by the same structure depicted in Figure \ref{fig:hexagonal_structure_metamaterial}. 

            It is given by six circular resonators $D_1, \ldots, D_6$ in the fundamental domain $\{a_1g_1 + a_2g_2\, |\, a_1, a_2 \in [0,1)\}$ associated to the lattice vectors 
            \begin{align*}
                g_1 = \begin{pmatrix} \sqrt{3} \\
                    0 \end{pmatrix}, \quad g_2 = \begin{pmatrix} \sqrt{3}/2 \\
                        3/2 \end{pmatrix}.
            \end{align*}

            \begin{figure}[h]
                \centering
                \includegraphics{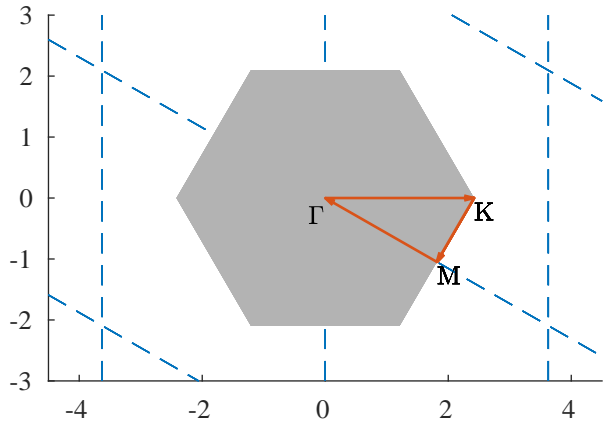}
                \caption{The first Brillouin zone associated to the hexagonal structure used in subsection \ref{subsec:TypeI.a_top_eff} is colored in grey. On it are displayed the high symmetry points ($~\Gamma$, $K$ and $M$) and the (high) symmetry curve, which is indicated with orange arrows. The reciprocal lattice is displayed using blue dashed lines.}
                \label{fig:brioullin_zone}
            \end{figure}

            The resonators have equal radius $R = 0.1$ and are arranged in two trimer blocks (resonators 1, 2, 3 and resonators 4, 5, 6, respectively). The positions of the centers $c_1, \ldots, c_6$ of the six resonators $D_1,\ldots, D_6$ in the fundamental domain are give by

            \begin{align*}
                c_1 = \frac{1}{3}(g_1 + g_2) + 3R\begin{pmatrix} \cos(\frac{\pi}{6}) \\ \sin(\frac{\pi}{6}) \end{pmatrix}, ~
                c_2 = \frac{1}{3}(g_1 + g_2) + 3R\begin{pmatrix} \cos(\frac{5\pi}{6}) \\ \sin(\frac{5\pi}{6}) \end{pmatrix}, ~
                c_3 = \frac{1}{3}(g_1 + g_2) + 3R\begin{pmatrix} \cos(\frac{9\pi}{6}) \\ \sin(\frac{9\pi}{6}) \end{pmatrix},\\
                c_4 = \frac{2}{3}(g_1 + g_2) - 3R\begin{pmatrix} \cos(\frac{9\pi}{6}) \\ \sin(\frac{9\pi}{6}) \end{pmatrix}, ~
                c_5 = \frac{2}{3}(g_1 + g_2) - 3R\begin{pmatrix} \cos(\frac{\pi}{6}) \\ \sin(\frac{\pi}{6}) \end{pmatrix}, ~
                c_6 = \frac{2}{3}(g_1 + g_2) - 3R\begin{pmatrix} \cos(\frac{5\pi}{6}) \\ \sin(\frac{5\pi}{6}) \end{pmatrix}.
            \end{align*}

            In the case of such a highly symmetric static structure it suffices to consider the capacitance matrix formulation on the \emph{(high) symmetry curve} of the reciprocal lattice $L$. This reduces the a priori 2-dimensionally parameterized system to a 1-dimensionally parameterized system. The new parameterization set is then given by the piecewise linearly interpolated path through the points $\Gamma$, $K$ and $M$. It is depicted in Figure \ref{fig:brioullin_zone}.

            \begin{figure}[h]
                \centering
                \begin{subfigure}[b]{0.45\textwidth}
                    \centering
                    \includegraphics[width=\textwidth]{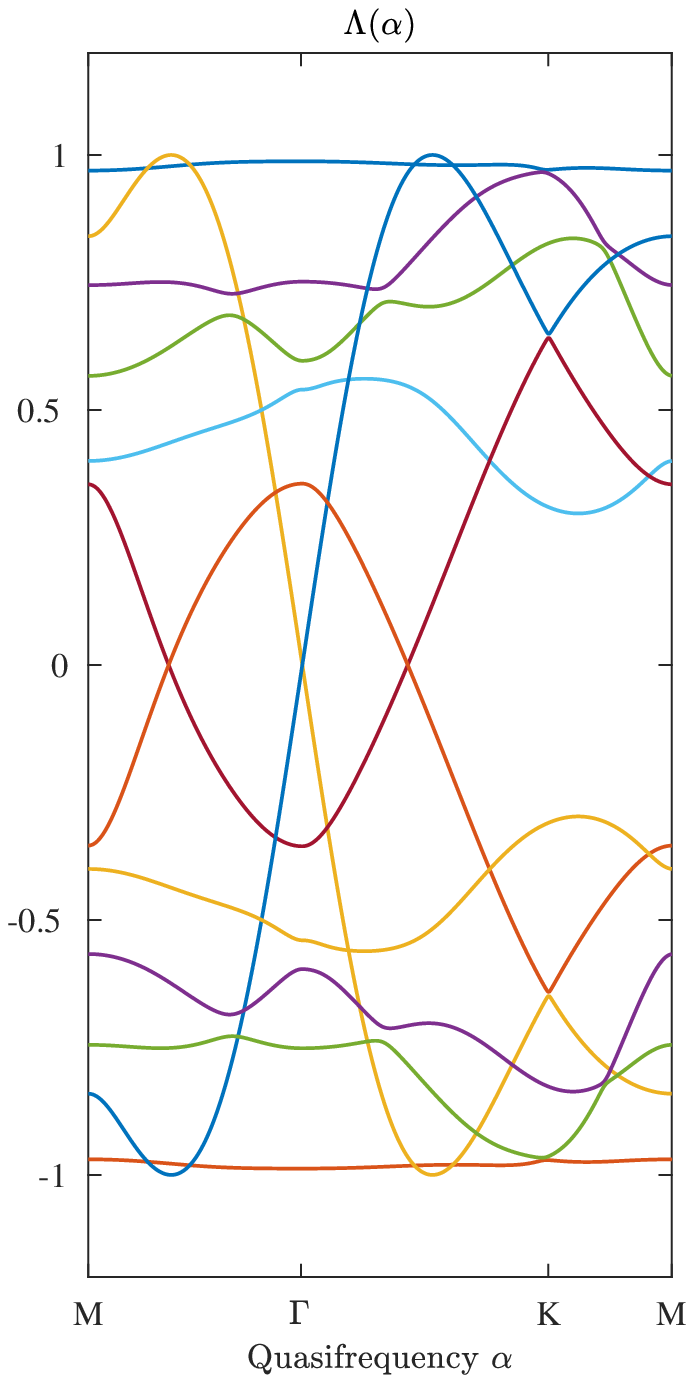}
                    \caption{Band structure on the symmetry curve.}
                    \label{fig:hexagonal_structure_metamaterial_N_100}
                \end{subfigure}
                \hfill
                \begin{subfigure}[b]{0.45\textwidth}
                    \centering
                    \includegraphics[width=\textwidth]{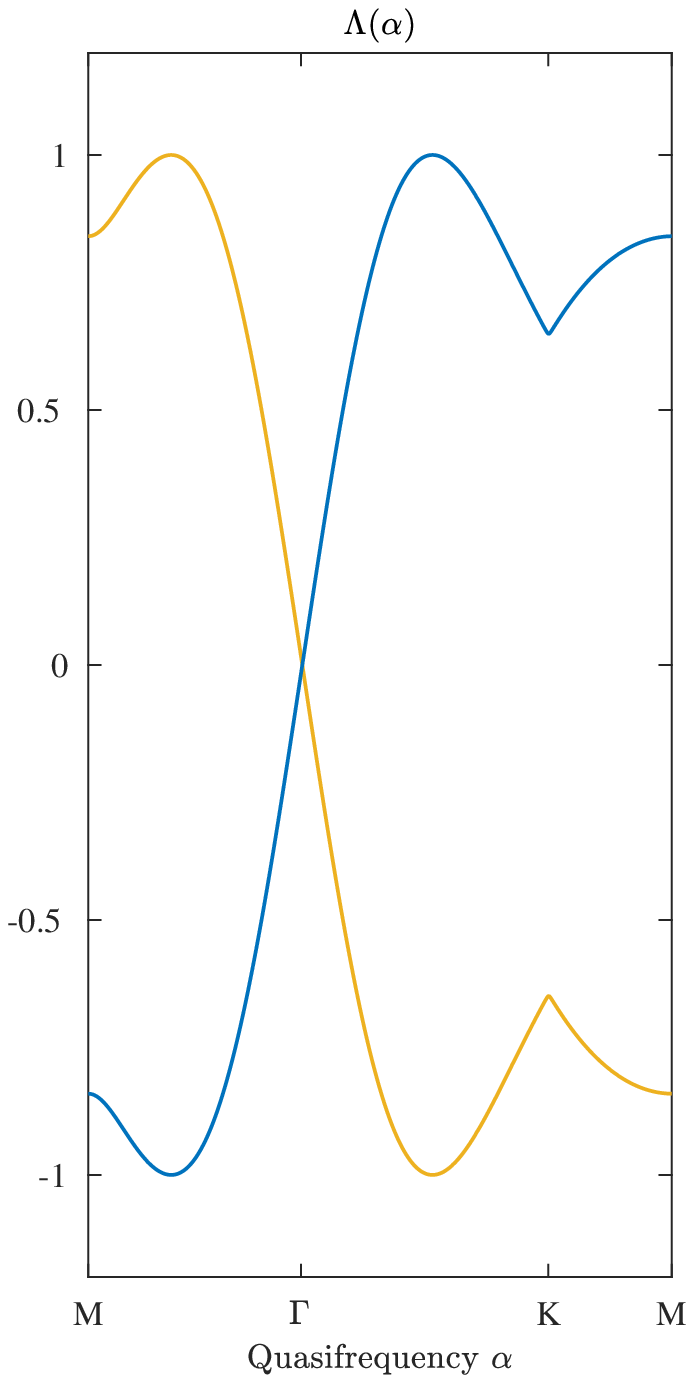}
                    \caption{Topological non-trivial bands.}
                    \label{fig:hexagonal_structure_metamaterial_N_100_winding_band}
                \end{subfigure}
                \caption{Example of a band structure with non-trivial Type I.a invariant. The band structure of the hexagonal structure presented in this section with modulation given by equation \eqref{eq:modulation1} and step size equal to $1.21 \times 10^{-2}$, which means $100$ steps between $M$ and $\Gamma$. }
            \end{figure}

            By the results of the previous subsection \ref{subsec:S1_param_top_invar}, it thus follows that the Type I.b and and the homotopy class of $S(\alpha,\overline{t})$ are always trivial in this case. Hence, the Type I.a Topological Invariant is a good indicator for the different topological nature of subwavelength solutions to the wave equation associated to a time-modulated hexagonal structure. It remains to show that this invariant takes interesting values for some instances of time-modulated hexagonal structure. In the following we will thus present two examples of different instances of the Type I.a Topological Invariant.
            
            The following modulation of the bulk modulus $\kappa$ inside the resonators of the above defined material gives an example of a modulated hexagonal structure where the associated Type I.a Topological Invariant is \emph{non-trivial} and \emph{equal to 2}.

            Using the notation from equation \eqref{eq:resonatormod}, the following modulation was used 
            \begin{align}\label{eq:modulation1}
                \kappa_1(t) = 1/(1+\epsilon\cos(\Omega t)),~ \kappa_2(t) = 1/(1+\epsilon\cos(\Omega t+2\pi/3)),~ \kappa_3(t) = 1/(1+\epsilon\cos(\Omega t+4\pi/3)),~\\
                \kappa_4(t) = 1/(1+\epsilon\cos(\Omega t)),~ \kappa_5(t) = 1/(1+\epsilon\cos(\Omega t+2\pi/3)),~ \kappa_6(t) = 1/(1+\epsilon\cos(\Omega t+4\pi/3)),~ 
            \end{align}
            where $\Omega = 0.2$ and $\epsilon = 0.001$. The density $\rho$ was not modulated and set to $\rho_0 = 1$ for the background material and equal to $1/9000$ inside the resonators, giving a contrast parameter of $\delta = 1/9000$, ensuring the high-contrast regime.
            \begin{figure}[h!]
                \centering
                \begin{subfigure}[b]{0.32\textwidth}
                    \centering
                    \includegraphics[width=\textwidth]{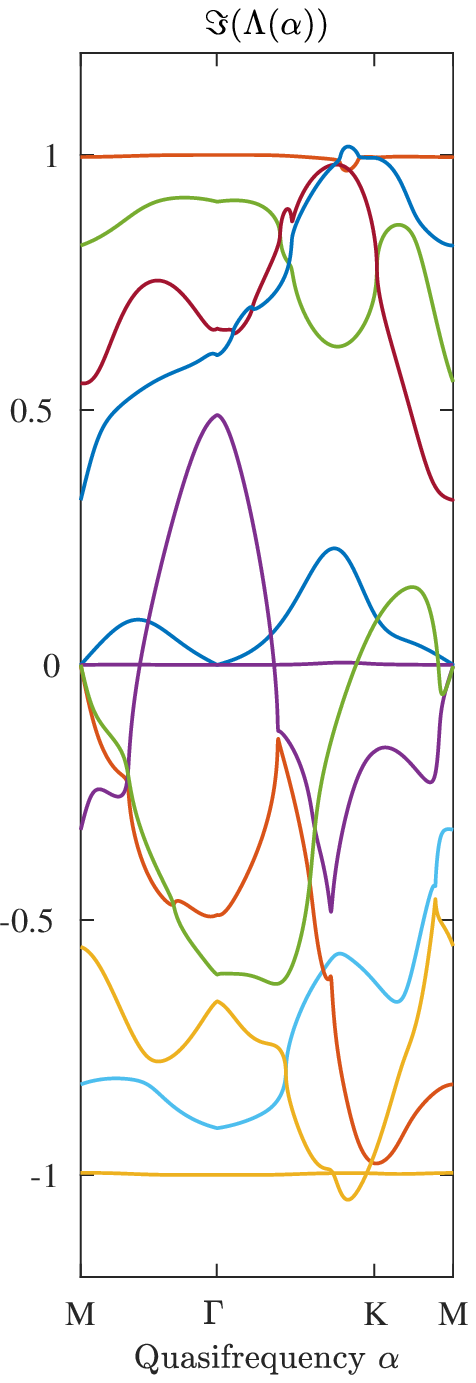}
                    \caption{Imaginary part of band structure on the symmetry curve.}
                    \label{fig:hexagonal_structure_metamaterial_N_200_strong_modulation}
                \end{subfigure}
                \hfill
                \begin{subfigure}[b]{0.32\textwidth}
                    \centering
                    \includegraphics[width=\textwidth]{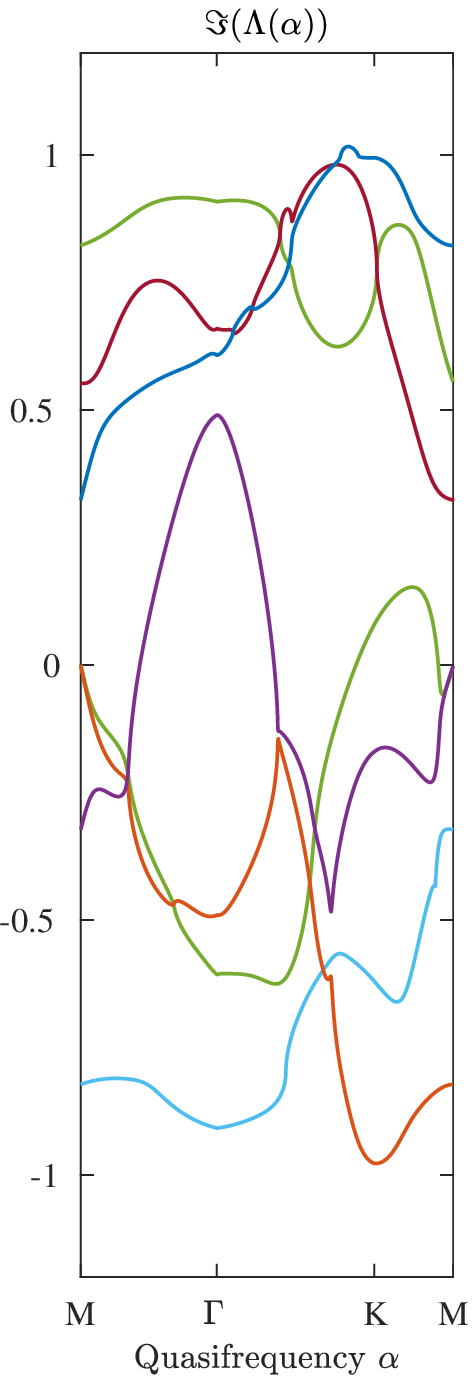}
                    \caption{Imaginary part of topological non-trivial bands.}
                    \label{fig:hexagonal_structure_metamaterial_N_200_strong_modulation_nontriv_bands_1}
                \end{subfigure}
                \hfill
                \begin{subfigure}[b]{0.32\textwidth}
                    \centering
                    \includegraphics[width=\textwidth]{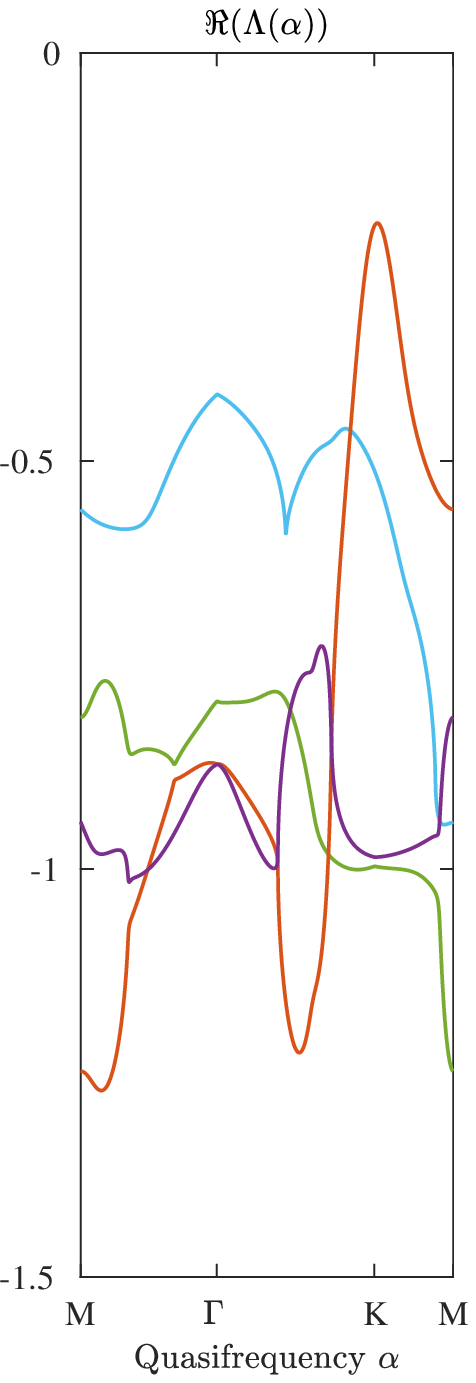}
                    \caption{Real part of lower block of topological non-trivial bands.}
                    \label{fig:hexagonal_structure_metamaterial_N_200_strong_modulation_nontriv_bands_2}
                \end{subfigure}
                \caption{Example of the band structure of a strong modulation with non-trivial Type I.a invariant. The band structure of the time-modulated hexagonal structure with modulation given by equations \eqref{eq:modulation1_kappa}-\eqref{eq:modulation1_rho}. The step size was set to $6.05 \times 10^{-3}$, which corresponds to 200 steps between $M$ and $\Gamma$. On Figure \ref{fig:hexagonal_structure_metamaterial_N_200_strong_modulation}, the imaginary part of all bands are displayed. In Figure \ref{fig:hexagonal_structure_metamaterial_N_200_strong_modulation_nontriv_bands_1} only the imaginary part of those bands which are topologically non-trivial are depicted. This is not evident in the case of the green band. For this reason also the real part of the lower group of topologically non-trivial bands is displayed in Figure \ref{fig:hexagonal_structure_metamaterial_N_200_strong_modulation_nontriv_bands_2}.}
            \end{figure}

            This setting has the associated band structure depicted in Figure \ref{fig:hexagonal_structure_metamaterial_N_100}. Most bands can be glued continuously across the point $M$. However, one band does not connect continuously (see Figure \ref{fig:hexagonal_structure_metamaterial_N_100_winding_band}), but connects with another band which then connects to the former. Thus, the Type I.a Topological Invariant is given by $2$ and the structure is topologically non-trivial.
            
            Next, we will present another example of a time-modulation of the hexagonal structure  displayed in Figure \ref{fig:hexagonal_structure_metamaterial}. This example has the same periodicity in space as well as in time (i.e., the same frequency) as the first example. However, it has three main differences. First, not only the bulk modulus $\kappa$ but also the density $\rho$ is modulated within the resonators, second, a considerably stronger modulation is used. When in the former example a modulation amplitude of $\epsilon = 0.001$ was used, in the following example the modulation will be 300-500 times larger, having an amplitude of $\epsilon_\kappa = 0.5$ for the bulk modulus $\kappa$ and an amplitude of $\epsilon_\rho = 0.3$ for the density $\rho$. Third, the phase shifts of the modulations of the resonators are different.
            
            Using the notation from equation \eqref{eq:resonatormod}, the following modulation of the bulk modulus $\kappa$ was used: 
            \begin{align}\label{eq:modulation1_kappa}
                \kappa_1(t) = \frac{1}{1+\epsilon_\kappa\cos(\Omega t)},\quad \kappa_2(t) = \frac{1}{1+\epsilon_\kappa\cos\left(\Omega t+\frac{2\pi}{3}\right)},\quad \kappa_3(t) = \frac{1}{1+\epsilon_\kappa\cos\left(\Omega t+\frac{4\pi}{3}\right)},\\
                \kappa_4(t) = \frac{1}{1+\epsilon_\kappa\cos\left(\Omega t+\frac{2\pi}{3}\right)},\quad \kappa_5(t) = \frac{1}{1+\epsilon_\kappa\cos(\Omega t)},\quad \kappa_6(t) = \frac{1}{1+\epsilon_\kappa\cos\left(\Omega t+\frac{4\pi}{3}\right)},
            \end{align}
            where $\Omega = 0.2$ and $\epsilon_\kappa = 0.5$. The shape of the modulation is different from the first example, for this example, it is \emph{mirror-symmetric} between the two trimers.
            
            Defining the modulation of the density, using the notation from equation \eqref{eq:resonatormod}, the following modulation of the density $\rho$ was used 
            \begin{align}
                \rho_1(t) = \frac{1}{1-\epsilon_\rho\cos(\Omega t)},\quad \rho_2(t) = \frac{1}{1-\epsilon_\rho\cos(\Omega t+2\pi/3)},\quad \rho_3(t) = \frac{1}{1-\epsilon_\rho\cos(\Omega t+4\pi/3)}, \\
                \rho_4(t) = \frac{1}{1-\epsilon_\rho\cos(\Omega t+2\pi/3)},\quad \rho_5(t) = \frac{1}{1-\epsilon_\rho\cos(\Omega t)},\quad \rho_6(t) = \frac{1}{1-\epsilon_\rho\cos(\Omega t+4\pi/3)},\label{eq:modulation1_rho}
            \end{align}
            where $\epsilon_\rho = 0.3$. Note, that also $\rho$ was chosen to be mirror-symmetric between the two trimers.            

            This choice of modulation leads to the band structure depicted in Figure \ref{fig:hexagonal_structure_metamaterial_N_200_strong_modulation}. Its instance of the Type I.a Topological Invariant is non-trivial, in fact, it is given by $12$. In Figures \ref{fig:hexagonal_structure_metamaterial_N_200_strong_modulation_nontriv_bands_1} the two groups of non-trivial bands are displayed. The first group (see the top part of Figure \ref{fig:hexagonal_structure_metamaterial_N_200_strong_modulation_nontriv_bands_1}) being of multiplicity $3$ and the second group (see the lower part of Figures \ref{fig:hexagonal_structure_metamaterial_N_200_strong_modulation_nontriv_bands_1} and \ref{fig:hexagonal_structure_metamaterial_N_200_strong_modulation_nontriv_bands_2}) is of multiplicity $4$, leading to a Type I.a Topological Invariant of $12$. In order to identify the multiplicity of the second group it is necessary to consider the real part of the bands, which are displayed in Figure \ref{fig:hexagonal_structure_metamaterial_N_200_strong_modulation_nontriv_bands_2}. There, it becomes apparent that the purple band is continued by the green band which in turn is continued by the orange band.

    %%%%%%%%%%%%%%%%%%%%%%%%%%%%%%%%%%%%%%%%%%%%%%%%%%%%%%%%%%%%%%%%%%%%%%%%%%%%%%%%%%%%%%%%%%%%%%%%%%%%%%%%%%%%%%%%%%%%%%%%%%%%%%%%%%%%%%%%%%%
    %%%%%%%%%%%%%%%%%%%%%%%%%%%%%%%%%%%% \section{General background theory} %%%%%%%%%%%%%%%%%%%%%%%%%%%%%%%%%%%%%%%%%%%%%%%%%%%%%%%%%%%%%%%%%%
    %%%%%%%%%%%%%%%%%%%%%%%%%%%%%%%%%%%%%%%%%%%%%%%%%%%%%%%%%%%%%%%%%%%%%%%%%%%%%%%%%%%%%%%%%%%%%%%%%%%%%%%%%%%%%%%%%%%%%%%%%%%%%%%%%%%%%%%%%%%
    \section{Concluding remarks}
        In this paper, a through topological characterization of the time-local and time-global behaviors of the fundamental solutions of periodically parameterized periodic lODEs was introduced (see section \ref{sec:Topological_phenomena_of_periodically_parameterized_linear_ODEs}). The main starting point of the theory was given by a generalisation of the Floquet normal form of fundamental solutions of periodic lODEs. In fact, a Floquet normal form was introduced, which depends continuously and periodically on the parameter of the considered periodic lODE, allowing for the consideration of homotopy classes of the respective components of the Floquet normal form. Having introduced the respective topological invariants: Type I.a Topological Invariant and Type I.b Homotopy Class for the time-global behavior and Type II.a and Type II.b Homotopy Class for the time-local behavior of the associated parameterized fundamental solution, we were able to perform some analysis on the complexity of the system (e.g. degree of the lODE and the dimension of the parameter space) needed in order to obtain certain topological effects, see subsection \ref{subsec:S1_param_top_invar}. Most notably, non-trivial Type I.b Homotopy Classes can only be obtained in the case where the parameter space $\mathbb{T}^d$ is of dimension $d \geq 2$. The Type I.a Topological Invariant and the homotopy class of $\Delta \circ P \iota \in \pi^1(\mathbb{S}^1)$ being meaningful in the greatest amount of settings, one can use them to topologically distinguish systems with parameter spaces of any dimension $d\geq 1$ and the former is even applicable in the setting of lODEs with constant coefficients.

        The implications of this new topological theory for periodically time-modulated hexagonal structures in the high contrast, subwavelength regime are the following (see section \ref{sec:Application_to_high-contrast_acoustic_hexagonal_metamaterial_structures}). Due to the symmetry of a hexagonal structure, some topological effects of the associated subwavelength solutions to the time-modulated material can be distinguished by the Type I.a topological invariant.

        We were able to provide two interesting modulation examples of a hexagonal structure which were of non-trivial Type I.a, proving that topologically non-trivial modulations of hexagonal structures do exist. We even provided an example of a relatively high instance of the Type I.a Topological Invariant, the example provided had was brading 12 times.

        To come back to the original motivation of this paper, namely the analogue of the bulk boundary correspondence in the setting of Floquet metamaterials, a next step would be to built upon the results of this paper and analyse the provided examples for the occurrence of edge modes.
        
        Furthermore, we would like to point out that the developed theory can be refined by considering real-valued coefficient matrices $A: \mathbb{R}^d \times \mathbb{R} \rightarrow \Mat_{N \times N}(\mathbb{R})$ instead of complex valued coefficient matrices. This would particularly enrich the Type II.b topological effects. Indeed, it would mean that the Homotopy Class of $S(\alpha, \overline{t})$ would no longer be taken in the space $C^0(\mathbb{T}^{d+1}, \SU(N))$ but in the space $C^0(\mathbb{T}^{d+1}, \SO(N))$, see e.g. Remark \ref{rem:theory_real_coef_mat}. This might provide more topological effects, since $\SO(N)$ has richer topological properties than $\SU(N)$, in particular $\SO(N)$ is not simply connected. This would in particular imply that the \emph{real} Type II.b Homotopy Class also depends on the Homotopy Class of $S(\alpha, \overline{t})$, even in the case of a one-dimensional parameter space.

\bibliographystyle{abbrv}
\bibliography{references}

\end{document}